\renewcommand\labelenumi{\textup{(\roman{enumi})}}
\renewcommand\theenumi\labelenumi
\renewcommand\labelenumii{(\alph{enumii})}
\renewcommand\theenumii\labelenumii
\renewcommand\theenumii\labelenumii
\theoremstyle{theorem} \newtheorem{theorem}{Theorem}[section]
\theoremstyle{theorem} \newtheorem{lemma}[theorem]{Lemma}
\theoremstyle{theorem} \newtheorem{proposition}[theorem]{Proposition}
\theoremstyle{theorem} \newtheorem{corollary}[theorem]{Corollary}
\theoremstyle{definition} \newtheorem{definition}[theorem]{Definition}
\theoremstyle{remark} \newtheorem{remark}[theorem]{Remark}
\theoremstyle{definition}  \newtheorem{example}[theorem]{Example}
\theoremstyle{definition}  \newtheorem*{ack}{Acknowledgement}
\DeclareMathOperator \re {Re}
\DeclareMathOperator \id {id}
\DeclareMathOperator \supp {supp}
\newcommand{\I}{\mathds{1}}
\newcommand\floor[1]{\left\lfloor #1 \right\rfloor}
\newcommand\mc[1] {\mathcal{#1}}
\newcommand\mbb[1] {\mathds{#1}}
\newcommand{\eps}{\varepsilon}
\newcommand{\sint}{\strokedint}
\newcommand{\primo}{1\textsuperscript{o}}
\newcommand{\primolist}{\par\smallskip\textbf\primo\;\;}
\newcommand{\secundo}{2\textsuperscript{o}}
\newcommand{\secundolist}{\par\smallskip\textbf\secundo\;\;}
\newcommand{\tertio}{3\textsuperscript{o}}
\newcommand{\tertiolist}{\par\smallskip\textbf\tertio\;\;}
\newcommand{\quarto}{4\textsuperscript{o}}
\newcommand{\quartolist}{\par\smallskip\textbf\quarto\;\;}
\newcommand{\mathscr}{\mathcal}
\newcommand{\real}{\mathds{R}}
\newcommand{\Bscr}{\mathscr{B}}
\begin{document}

\title[Convolution inequalities for Besov and Triebel--Lizorkin spaces]{Convolution inequalities for Besov and Triebel--Lizorkin spaces, and applications to convolution semigroups}
\author[F.~K\"{u}hn]{Franziska K\"{u}hn}
\address[F.~K\"{u}hn]{TU Dresden, Fakult\"at Mathematik, Institut f\"{u}r Mathematische Stochastik, 01062 Dresden, Germany. E-Mail: \textnormal{franziska.kuehn1@tu-dresden.de}}

\author[R.L.~Schilling]{Ren\'e L.\ Schilling}
\address[R.L.~Schilling]{TU Dresden, Fakult\"at Mathematik, Institut f\"{u}r Mathematische Stochastik, 01062 Dresden, Germany. E-Mail: \textnormal{rene.schilling@tu-dresden.de}}
\subjclass[2020]{Primary 46E35; Secondary 60J76, 60G51, 35K25}
\keywords{convolution, function space, convolution semigroup, L\'evy process, higher-order heat kernel, generalized Gau{\ss}--Weierstra{\ss} semigroup, caloric smoothing.}

\begin{abstract}
    We establish convolution inequalities for Besov spaces $B_{p,q}^s$ and Triebel--Lizorkin spaces $F_{p,q}^s$. As an application, we study the mapping properties of convolution semigroups, considered as operators on the function spaces $A_{p,q}^s$, $A \in \{B,F\}$. Our results apply to a wide class of convolution semigroups including the Gau{\ss}--Weierstra{\ss} semigroup, stable semigroups and heat kernels for higher-order powers of the Laplacian $(-\Delta)^m$, and we can derive various caloric smoothing estimates.
\end{abstract}

\hfill\emph{\underline{Accepted for publication in Studia Mathematica}}  \par \bigskip

\maketitle

The convolution $f*g$ of two functions $f,g: \mbb{R}^n \to \mbb{R}$ is defined by
\begin{equation*}
	(f*g)(x) := \int f(x-y) g(y) \, dy, \quad x \in \mbb{R}^n,
\end{equation*}
provided that the integral exists. It is well known that the convolution $f*g$ inherits the regularity  of both $f$ and $g$, in the sense that if, say, $f \in C_c^k(\mbb{R}^n)$ and $g \in C_c^m(\mbb{R}^n)$, then $f*g \in C_c^ {k+m}(\mbb{R}^n)$. In this article, we study the regularizing properties of the convolution in the scales of Besov spaces $B_{p,q}^s$ and Triebel--Lizorkin spaces $F_{p,q}^s$. Having in mind that the parameters $s$ and $p$ describe the regularity, resp., integrability properties of $f \in A_{p,q}^s$, $A \in \{B,F\}$, it is natural to expect that the implication
\begin{equation}
	f \in A_{p_1,q_1}^s, \; g \in A_{p_2,q_2}^u \implies f*g \in A_{p,q}^{s+u}, \quad\quad s,u \in \mbb{R}, \label{intro-eq1}
\end{equation}
holds under suitable assumptions on $p,p_1,p_2 \in [1,\infty]$ and $q,q_1,q_2 \in (0,\infty]$. We establish inequalities of the form
\begin{equation}
	\|f*g \mid A_{p,q}^s\| \leq \|f \mid A_{p_1,q}^s\| \cdot \|g \mid L_{p_2}\|
	\quad \text{and} \quad
	\|f*g \mid A_{p,q}^{s+u}\| \leq C \|f \mid A_{p_1,q_1}^s\| \cdot \|g \mid A_{p_2,q_2}^u\|,
	\label{intro-eq2}
\end{equation}
for $s\in\mbb{R}, u\geq 0$ and parameters  $p,p_i \in [1,\infty]$ and $q,q_i \in (0,\infty]$ satisfying  certain relations, see Section~\ref{conv} below; as a by-product we get \eqref{intro-eq1}. For Besov spaces ($A=B$), the convolution estimates \eqref{intro-eq2} have been shown by Burenkov \cite{burenkov90};  the proof relies heavily on Young's inequality. For Triebel--Lizorkin spaces, the proof is more complicated and, in particular, the case that (at least) one of the integrability parameters is infinite, i.e.\ $\max\{p,p_1,p_2\}=\infty$, requires some care.

In the second part of the paper, we use these convolution estimates to study the mapping properties of convolution semigroups, considered as operators on the spaces $A_{p,q}^s$. Section~\ref{semi} is about semigroups $P_t f = f*p_t$ with possibly signed kernels $(p_t)_{t \geq 0} \subseteq L_1(dx)$. It is immediate from \eqref{intro-eq2} that
\begin{align*}
	\|P_t f \mid A_{p,q}^s\| \leq \|p_t \mid L_1\| \cdot \|f \mid A_{p,q}^s\|
	\quad \text{and} \quad
	\|P_t f \mid A_{p,q}^{s+u}\| \leq C \|p_t \mid A_{1,\infty}^u\| \cdot \|f \mid A_{p,q}^{s}\|,
\end{align*}
and so the norm $\|p_t \mid A_{1,\infty}^u\|$ plays a central role. We establish several results which are useful to bound this norm. In particular, we show that the kernels $p_t$ are self-regularizing in time, cf.\ Corollary~\ref{semi-5}, and that $\|p_t \mid A_{1,\infty}^u\|$ can be bounded from above by a power of $\int |\nabla p_t(x)| \, dx$ provided that $p_t \in C^1$, see Theorem~\ref{semi-11}. In Section~\ref{levy} we turn to convolution semigroups with non-negative kernels $p_t$ satisfying $\int p_t(x) \, dx=1$. Such semigroups appear naturally in probability theory, where $(p_t)_{t>0}$ can be interpreted as the family of transition densities of a L\'evy process, i.e.\ a stochastic process with independent and stationary increments. We obtain general convolution estimates using gradient estimates from \cite{ssw12}. Moreover, we study the smoothing properties of subordinate semigroups and recover some results from the recent paper \cite{knop19}.
Our convolution estimates apply to a wide class of semigroups, including all semigroups which are generated by arbitrary powers of the Laplacian $(-\Delta)^\kappa$, $\kappa\in (0,\infty)$, i.e.\ the classical ($\kappa = 1$) and the generalized (higher-order, $\kappa >1$)  Gau{\ss}--Weierstra{\ss} semigroups (cf.\ Theorem~\ref{semi-11}), the Cauchy--Poisson semigroup ($\kappa=1/2$, cf.\ Example~\ref{levy-8}) and stable semigroups ($\kappa\in (0,1)$, cf.\ Corollary~\ref{levy-9}).

\section{Function spaces} \label{fun}

For $0< p \leq \infty$ we denote by $L_p(\mu)$ the space of functions which are $p$th order integrable with respect to the measure $\mu$ on $(\real^n,\Bscr(\real^n))$; note that $L_p(\mu)$ is a quasi-Banach space if $0<p<1$. The (quasi-)norm on the (quasi-)Banach spaces $L_p(\mu)$ is denoted by $\|\bullet \mid L_p(\mu)\|$. We simply write $L_p$ if $\mu$ is Lebesgue measure $\lambda$, and $\ell_p:=\ell_p(\mbb{N}_0)$ is the space of $p$th order summable sequences indexed by $\mbb{N}_0$. For a sequence of functions $f_k: \mbb{R}^n \to \mbb{R}$, $k \geq 0$, we write
\begin{equation*}
		\|f_k \mid \ell_q \mid L_p(\mu)\|
	:=\|f_k(x) \mid \ell_q \mid L_p(\mu)\|
	:= \left( \int \left[ \sum_{k = 0}^{\infty} |f_k(x)|^q \right]^{p/q} \, \mu(dx) \right)^{1/p}
\end{equation*}
(with the usual modification if $p=\infty$ or $q=\infty$); analogously, $\|f_k \mid L_p(\mu) \mid \ell_q\|$ means that we first take the $L_p(\mu)$-norm (in the variable $x$) and then the $\ell_q$-norm (in $k$).

For Schwartz functions $\phi \in \mc{S}(\mbb{R}^n)$, say, we denote by
\begin{equation*}
		\mc{F}\phi(\xi) := \frac{1}{(2\pi)^{n/2}} \int_{\mbb{R}^n} e^{-ix \xi} \phi(x) \, dx, \quad \xi \in \mbb{R}^n,
\end{equation*}
the Fourier transform; the inverse Fourier transform is denoted by $\mc{F}^{-1}\phi$. Both the Fourier transform and its inverse are extended in the usual way to the space of tempered distributions $\mc{S}'(\mbb{R}^n)$. We denote by
\begin{equation*}
	\phi(D) f(x) := \mc{F}^{-1}(\phi \cdot \mc{F}f)(x)
\end{equation*}
the \emph{pseudo-differential operator} (Fourier multiplier operator) with symbol $\phi$.

For some fixed $\phi_0 \in C_c^{\infty}(\mbb{R}^n)$ with $\I_{B(0,1)} \leq \phi_0 \leq \I_{B(0,3/2)}$, define $\phi_k(x) :=\phi_0(2^{-k}x)-\phi_0(2^{-(k-1)}x)$, $k \in \mbb{N}$, then $\sum_{k=0}^{\infty} \phi_k(x)=1$ for all $x \in \mbb{R}^n$, and so $(\phi_k)_{k \in \mbb{N}_0}$ is a \emph{resolution of unity}. Following Triebel \cite{triebel1,triebel4}, we define the Besov spaces $B_{p,q}^s$ and the Triebel--Lizorkin spaces $F_{p,q}^s$ with $s \in \mbb{R}$ by
\begin{align*}
    B_{p,q}^s
    &:= \left\{f \in \mc{S}'(\mbb{R}^n); \:\|f \mid B_{p,q}^s\| := \|2^{ks} \phi_k(D) f \mid L_p \mid \ell_q\| < \infty \right\}, \quad p,q \in (0,\infty], \\
	F_{p,q}^s
&:= \left\{ f \in \mc{S}'(\mbb{R}^n); \: \|f \mid F_{p,q}^s\| := \|2^{ks} \phi_k(D) f \mid \ell_q \mid L_p\| < \infty \right\}, \quad p \in (0,\infty), \; q \in (0,\infty].
\end{align*}
In order to define the Triebel--Lizorkin spaces $F_{p,q}^s$ for $p=\infty$, we use cubes $Q_{J,M} := 2^{-J} M + (0,2^{-J})^n$ and write $\sint_Q g(x) \, dx := {\lambda(Q)}^{-1} \int_Q g(x) \, dx$. Following \cite{triebel4} we set
\begin{align*}
	F_{\infty,q}^s
    &:= \bigg\{f \in \mc{S}'(\mbb{R}^n); \:\|f \mid F_{\infty,q}^s\|
    := \sup_{J \geq 0, M \in \mbb{Z}^n}  \left( \sint_{Q_{J,M}} \sum_{k=J}^{\infty} |2^{ks} \phi_k(D) f(x)|^q \, dx \right)^{1/q} < \infty \bigg\}, \,\, q \in (0,\infty), \\
	F_{\infty,\infty}^s
    &:= B_{\infty,\infty}^s.
\end{align*}
These definitions do not depend on the particular resolution of unity $(\phi_k)_{k \in \mbb{N}_0}$: Different resolutions of unity lead to equivalent (quasi-)norms, cf.\ \cite[Proposition 2.3.2]{triebel1} and \cite[Remark 1.2]{triebel4}. A close inspection\footnote{To wit: \label{blind-label} For a sequence $(\phi_k)_{k \in \mbb{N}_0}$ with the above properties, set $m := \sum^{\infty}_{k=0} \phi_k \in C_b^{\infty}$. Then $m$ and $1/m$ are Fourier multipliers, see \cite[Section 1.5.2]{triebel1} (B-scale), \cite[Theorem, p.~75]{triebel1} ($F$-scale, $p<\infty$) and \cite{park} ($F$-scale, $p=\infty$), and it follows that both $(\phi_k)_{k \in \mbb{N}_0}$ and the resolution of unity $(\frac{1}{m} \phi_k)_{k \in \mbb{N}_0}$ define equivalent (quasi-)norms.} of the proof of the equivalence of the (quasi-)norms reveals that the equivalence holds for any sequence $(\phi_k)_{k \in \mbb{N}_0} \subseteq C_c^{\infty}(\mbb{R}^n)$ of non-negative functions with the following properties: $\supp \phi_0 \subseteq B(0,2)$, $\supp \phi_k \subseteq \{y \in \mbb{R}^n;\: 2^{k-1} \leq |y|<2^{k+1}\}$, \begin{equation*}
	\forall \alpha \in \mbb{N}_0^n\::\: \sup_{k \in \mbb{N}_0} \sup_{x \in \mbb{R}^n} 2^{k|\alpha|} |\partial^{\alpha} \phi_k(x)| < \infty,
\end{equation*}
and there are constants $0<c < C < \infty$ such that
\begin{equation*}
	0 < c \leq \sum_{k=0}^{\infty} \phi_k(x) \leq C, \quad x \in \mbb{R}^n.
\end{equation*}

\section{Convolution estimates for Besov and Triebel--Lizorkin spaces} \label{conv}

In this section, we establish convolution estimates of the form
\begin{equation}
	\|f*g \mid A_{p,q}^s\| \leq \|f \mid A_{p_1,q}^s\| \cdot \|g \mid L_{p_2}\|
	\quad \text{and} \quad
	\|f*g \mid A_{p,q}^{s+u}\| \leq C \|f \mid A_{p_1,q_1}^s\| \cdot \|g \mid A_{p_2,q_2}^u\| \label{conv-eq1}
\end{equation}
for Besov spaces ($A=B$) and Triebel--Lizorkin spaces ($A=F$); here, $s,u \in \mbb{R}$ are arbitrary real numbers while $p,p_1,p_2 \in [1,\infty]$ and $q,q_1,q_2 \in (0,\infty]$ have to satisfy certain relations, see Theorem~\ref{conv-1} and Theorem~\ref{conv-3} below for precise statements.

Let us recall some results from measure theory which play an important role in the proofs and which give at the same time some intuition about the inequalities \eqref{conv-eq1}. One of the key tools is Young's inequality for convolutions \cite[Problem 15.14]{mims}; it shows how the integrability of the convolution $f*g$ relates to the integrability of $f$ and $g$, namely,
\begin{equation}\label{young}
	\|f*g \mid L_p\| \leq \|f \mid L_{p_1}\| \cdot \|g \mid L_{p_2}\|
\end{equation}
for $p,p_1,p_2 \in [1,\infty]$ satisfying the relation\footnote{here, and in the sequel, we agree that $1/\infty = 0$.}
\begin{equation}\label{young2}
	1+\frac{1}{p} = \frac{1}{p_1} + \frac{1}{p_2};
\end{equation}
this shows, in particular, that $|f*g|<\infty$ Lebesgue almost everywhere for $f \in L_{p_1}$, $g \in L_{p_2}$ with $\tfrac{1}{p_1}+\frac{1}{p_2}\geq 1$. For the proof of \eqref{conv-eq1} in the $F$-scale, we will also need Minkowski's integral inequality, which states that
\begin{equation}\label{min}
	\left( \int_Z \left( \int_Y |v(y,z)| \, \mu(dy) \right)^q \nu(dz) \right)^{1/q}
	\leq \int_Y \left( \int_Z |v(y,z)|^q \, \nu(dz) \right)^{1/q} \mu(dy)
\end{equation}
for any measurable function $v:Y \times Z \to \mbb{R}$, any $q \in [1,\infty)$ and any two $\sigma$-finite measures $\mu$, $\nu$ on measurable spaces $(Y,\mc{Y})$ and $(Z,\mc{Z})$, respectively, cf.\ \cite[Theorem 14.16]{mims}. We will frequently use this inequality for the counting measure $\nu=\sum_{k=0}^{\infty} \delta_k$ on $Z=\mbb{N}_0$:
\begin{equation}
	\left(\sum_{k=0}^{\infty} \left[ \int_Y |v_k(y)| \, \mu(dy) \right]^{q} \right)^{1/q}
	\leq \int_Y \left( \sum_{k=0}^{\infty} |v_k(y)|^q \right)^{1/q} \, \mu(dy).
	\label{min-counting}
\end{equation}
We are now ready to state the first convolution estimate.

\begin{theorem} \label{conv-1}
	Let $A \in \{B,F\}$, $s \in \mbb{R}$, $q \in (0,\infty]$ \textup{(}resp.\ $q \in [1,\infty]$ if $A=F$\textup{)} and $p,p_1,p_2 \in [1,\infty]$ be such that
	\begin{equation}
		1+ \frac{1}{p} = \frac{1}{p_1}+ \frac{1}{p_2}. \label{conv-eq11}
	\end{equation}
	If $f \in A_{p_1,q}^s$ and $g \in L_{p_2}$, then $f*g \in A_{p,q}^s$ and
	\begin{equation}
		\|f*g \mid A_{p,q}^s\| \leq C \|f \mid A_{p_1,q}^s\| \cdot \|g \mid L_{p_2}\| \label{conv-eq12}
	\end{equation}
	where $C=1$ if $p_1<\infty$ and $C=2^n$ if $p_1=\infty$.
\end{theorem}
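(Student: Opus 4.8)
The plan is to reduce everything to Young's inequality \eqref{young} applied to the Littlewood--Paley pieces, via the elementary commutation identity
\[
\phi_k(D)(f*g) = (\phi_k(D)f)*g, \qquad k \in \mbb{N}_0,
\]
which follows from $\mc{F}(f*g) = (2\pi)^{n/2}\,\mc{F}f\cdot\mc{F}g$ together with $\mc{F}(\phi_k(D)h)=\phi_k\,\mc{F}h$ (the factor $(2\pi)^{n/2}$ cancels). Since $\|f\mid A_{p_1,q}^s\|<\infty$ forces each $\phi_k(D)f\in L_{p_1}$, the functions $(\phi_k(D)f)*g$ are well-defined by Young's inequality; these are band-limited at frequency $\sim 2^k$, so $f*g:=\sum_k(\phi_k(D)f)*g$ is a tempered distribution whose $\phi_k(D)$-pieces are the $(\phi_k(D)f)*g$, and the estimate below shows its defining (quasi-)norm is finite. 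I would spell out this well-definedness on the band-limited pieces and then treat the three norm structures in turn. Throughout I write $f_k := 2^{ks}\phi_k(D)f$, so that the task is to bound the appropriate norm of the sequence $(f_k*g)_k$.

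For the Besov scale I argue termwise: Young's inequality gives $\|f_k*g\mid L_p\|\le \|f_k\mid L_{p_1}\|\cdot\|g\mid L_{p_2}\|$ for every $k$, and since $\|g\mid L_{p_2}\|$ is a constant in $k$ it factors out of the $\ell_q$-norm, yielding \eqref{conv-eq12} with $C=1$. The argument is identical for $q=\infty$ (with $\sup_k$ replacing the $\ell_q$-sum), so this settles $A=B$ for all admissible parameters, including $p_1=\infty$.

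For the Triebel--Lizorkin scale with $p<\infty$, note that \eqref{conv-eq11} forces $p_1<\infty$ here. I would first pull the convolution integral out of the $\ell_q$-sum: from $|(f_k*g)(x)|\le\int|f_k(x-y)|\,|g(y)|\,dy$ and Minkowski's integral inequality \eqref{min-counting} (applicable since $q\ge 1$; for $q=\infty$ one uses the trivial bound $\sup_k\int\le\int\sup_k$) one obtains
\[
\Big(\sum_{k=0}^\infty|(f_k*g)(x)|^q\Big)^{1/q}\le \int G(x-y)\,|g(y)|\,dy=(G*|g|)(x),\qquad G:=\Big(\sum_{k=0}^\infty|f_k|^q\Big)^{1/q}.
\]
Taking the $L_p$-norm and applying Young once more gives $\|f*g\mid F_{p,q}^s\|\le\|G\mid L_{p_1}\|\cdot\|g\mid L_{p_2}\|=\|f\mid F_{p_1,q}^s\|\cdot\|g\mid L_{p_2}\|$, i.e.\ $C=1$.

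The remaining, genuinely harder, case is $A=F$ with $p=\infty$, where \eqref{conv-eq11} forces $p_1=\infty$, $p_2=1$ and the norm is the cube average over $Q_{J,M}$. (When $q=\infty$ we have $F_{\infty,\infty}^s=B_{\infty,\infty}^s$, already covered above, so assume $q\in[1,\infty)$.) For fixed $(J,M)$ I would read the inner quantity as the $L_q$-norm of $(f_k*g)_{k\ge J}$ over the product of $(Q_{J,M},\tfrac{dx}{\lambda(Q_{J,M})})$ with the counting measure, and apply the general Minkowski inequality \eqref{min} with the measure $|g(y)|\,dy$ to move the $y$-integral outside. This reduces matters to bounding, uniformly in $y$, the translated average
\[
\Big(\sint_{Q_{J,M}-y} \sum_{k=J}^\infty |f_k(x)|^q\,dx\Big)^{1/q}.
\]
The key point is that $Q_{J,M}-y$ is a cube of side $2^{-J}$ but is no longer dyadic; it is covered, up to a null set, by at most $2^n$ dyadic cubes $Q_{J,M'}$ of the \emph{same} scale $J$ (in each coordinate a length-$2^{-J}$ interval meets at most two of the dyadic intervals). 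On each such cube the sum again starts at $k=J$, so its normalized average is at most $\|f\mid F_{\infty,q}^s\|^q$; summing the at most $2^n$ contributions gives $\sint_{Q_{J,M}-y}\sum_{k\ge J}|f_k|^q\,dx\le 2^n\|f\mid F_{\infty,q}^s\|^q$, i.e.\ the displayed translated average is bounded by $2^{n/q}\|f\mid F_{\infty,q}^s\|$. Inserting this into the Minkowski estimate and taking the supremum over $(J,M)$ yields $\|f*g\mid F_{\infty,q}^s\|\le 2^{n/q}\|f\mid F_{\infty,q}^s\|\cdot\|g\mid L_1\|\le 2^n\|f\mid F_{\infty,q}^s\|\cdot\|g\mid L_1\|$. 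This dyadic-covering step, together with the bookkeeping of the scale-dependent lower summation index $k\ge J$, is the main obstacle; everything else is Young's inequality combined with Minkowski.
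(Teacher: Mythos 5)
Your core strategy coincides with the paper's Step 1: the commutation identity $\phi_k(D)(f*g)=(\phi_k(D)f)*g$, termwise Young for the $B$-scale, Minkowski \eqref{min-counting} plus Young for the $F$-scale with $p<\infty$, and a shifted-cube argument for $F_{\infty,q}^s$. Two of your choices genuinely differ from the paper, and both are defensible. First, you define $f*g:=\sum_k(\phi_k(D)f)*g$ directly as an $\mc{S}'$-convergent series of band-limited pieces, whereas the paper proves the estimate for $f$ or $g$ Schwartz and then runs a density/completeness argument (its Step 2), including a delicate limiting procedure ($\eps\to0$, $\theta\to\infty$, $N\to\infty$) for the endpoint $f\in F_{1,\infty}^s$, $g\in L_\infty$, which your construction bypasses -- provided you really spell out the standard lemma that a series of functions with Fourier supports in the dyadic annuli and $L_p$-norms of at most polynomial growth converges in $\mc{S}'$, together with the overlap computation ($\phi_j\phi_k=0$ for $|j-k|\geq 2$) identifying the Littlewood--Paley pieces of your sum. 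Second, your dyadic-covering bound $\sint_{Q_{J,M}-y}\sum_{k\geq J}|f_k|^q\,dx\leq 2^n\|f\mid F_{\infty,q}^s\|^q$ is a self-contained proof of the inequality \eqref{conv-eq19}, which the paper imports from \cite{knop19}; your constant $2^{n/q}\leq 2^n$ matches the theorem.

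There is, however, a genuine gap: you claim that for $A=F$, $p=\infty$, the relation \eqref{conv-eq11} ``forces $p_1=\infty$, $p_2=1$''. It does not: $p=\infty$ only gives $1/p_1+1/p_2=1$, so any conjugate pair is admissible, e.g.\ $f\in F_{2,q}^s$, $g\in L_2$ with target $F_{\infty,q}^s$. None of your arguments covers these parameters: your $p<\infty$ argument ends by taking an $L_p$-norm, while the $F_{\infty,q}^s$-norm ($q<\infty$) is a normalized cube average, and your $p=\infty$ argument requires $g\in L_1$ and measures $f$ in the $F_{\infty,q}$-norm. The paper closes exactly this case in Step 1, Case 4: by Minkowski and H\"older, $\bigl(\sum_{k\geq J}2^{ksq}|(\phi_k(D)f)*g(x)|^q\bigr)^{1/q}\leq(|g|*F)(x)\leq\|F\mid L_{p_1}\|\cdot\|g\mid L_{p_1'}\|=\|f\mid F_{p_1,q}^s\|\cdot\|g\mid L_{p_2}\|$ uniformly in $x$, where $F:=\bigl(\sum_k 2^{ksq}|\phi_k(D)f|^q\bigr)^{1/q}$; since a uniform pointwise bound dominates every normalized cube average, this yields the assertion with $C=1$, as the theorem requires for $p_1<\infty$. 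The repair is short -- it is essentially your own $p<\infty$ computation with H\"older replacing the final application of Young -- but as written your case analysis omits these parameters. A minor further point: for $p_1=\infty$ the well-definedness of each piece $(\phi_k(D)f)*g$ needs $\phi_k(D)f\in L_\infty$, which for $f\in F_{\infty,q}^s$ is not immediate from the definition but follows from the embedding $F_{\infty,q}^s\hookrightarrow B_{\infty,\infty}^s$; similarly, your reduction of $q=\infty$ to the $B$-scale implicitly uses $\|f\mid B_{p_1,\infty}^s\|\leq\|f\mid F_{p_1,\infty}^s\|$.
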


For Besov spaces (i.e.\ $A=B$), the convolution estimate is a consequence of Young's inequality; this has already been observed by Burenkov \cite{burenkov90}. For Triebel--Lizorkin spaces, the proof of Theorem~\ref{conv-1} is more delicate, especially if $p=\infty$ or $p_i =\infty$, and it relies on Minkowski's integral inequality. Let us add that Theorem~\ref{conv-1} -- and Theorem~\ref{conv-3} below -- also hold for so-called \emph{homogeneous Besov and Triebel--Lizorkin spaces}, see e.g.\ \cite{triebel1} for more information on these function spaces.

Let us point out a subtle point in the statement of Theorem~\ref{conv-1}: For $s>0$ and $f \in A_{p_1,q}^s$, $g \in L_{p_2}$, the existence of the convolution $f*g$ as an element of $L_p$ follows from $A_{p_1,q}^s \subseteq L_{p_1}$ and Young's inequality; if $s \leq 0$, the inclusion $A_{p_1,q}^s \subseteq L_{p_1}$ breaks down -- we have only $A_{p_1,q}^s \subseteq \mc{S}'(\mbb{R}^n)$ -- and it is a priori not clear whether the convolution $f*g$ is well defined; we will see in the course of the proof in which sense the convolution can be defined.

\begin{proof}[Proof of Theorem~\ref{conv-1}]
\primolist In this first part of the proof, we additionally assume that $f$ or $g$ is a Schwartz function. Since $L_p$ and $A_{p,q}^s$ are continuously embedded into $\mc{S}'(\mbb{R}^n)$, the convolution $f*g \in C^{\infty}(\mbb{R}^n)$ exists as element of $\mc{S}'(\mbb{R}^n)$;  moreover,
\begin{equation*}
    \mc{F}^{-1}(\phi_k \mc{F}(f*g))
    = (2\pi)^{n/2} \mc{F}^{-1}(\phi_k \mc{F}f \cdot \mc{F}g)
    = (\mc{F}^{-1}(\phi_k \mc{F}f))* g
\end{equation*}
for the dyadic resolution $(\phi_k)_{k \in \mbb{N}_0}$, i.e.
\begin{equation}\label{conv-eq15}
	\phi_k(D)(f*g)= (\phi_k(D)f)*g.
\end{equation}
For the remaining part of this step, we will tacitly assume that $q<\infty$; for $q=\infty$ the proof is similar and, in fact, somewhat easier.

\medskip
Case 1: $A=B$. From \eqref{conv-eq15} and Young's inequality \eqref{young}, we see that
\begin{align*}
	\|f*g \mid B_{p,q}^s\|
	= \|2^{ks} \phi_k(D)(f*g) \mid L_p \mid \ell_q\|
	&= \|2^{ks} (\phi_k(D)f)*g \mid L_p \mid \ell_q\| \\
	&\leq \|g \mid L_{p_2}\| \cdot \|2^{ks} \phi_k(D)f \mid L_{p_1} \mid \ell_q\| \\
	&= \|g \mid L_{p_2}\| \cdot \|f \mid B_{p_1,q}^s\|.
\end{align*}

\medskip
Case 2: $A=F$ and $p,p_1<\infty$. We start with the observation that
\begin{equation*}
	\|f_k * g \mid \ell_q\| \leq (\|f_k \mid \ell_q\|)* g
\end{equation*}
for every $g \geq 0$ and every sequence of non-negative functions $(f_k)_{k \in \mbb{N}}$ with $f_k(x) \in \ell_q$, $x \in \mbb{R}^n$. (Note that this inequality is trivial if $q=\infty$). This follows from Minkowski's integral inequality \eqref{min-counting} where we use Lebesgue measure $\mu=\lambda$ and set $v_k(y):= f_k(x-y) g(y)$ keeping $x \in \mbb{R}^n$ fixed. Together with \eqref{conv-eq15}, we thus get
\begin{align*}
	\|f*g \mid F_{p,q}^s\|
	= \|2^{ks} (\phi_k(D)f)*g \mid \ell_q \mid L_{p}\|
	&\leq \|2^{ks} |\phi_k(D)f|*|g| \mid \ell_q \mid L_p\| \\
	&\leq \big\|(\|2^{ks} \phi_k(D)f \mid \ell_q \|)*|g| \mid L_p\big\|.
\end{align*}
Now an application of Young's inequality gives the assertion.

\medskip
Case 3: $A=F$ and $p_1=\infty$. If $p_1=\infty$, then, by \eqref{conv-eq11}, $1+{1}/{p} = {1}/{p_2}$, which implies $p =\infty$ and $p_2=1$, i.e.\ we need to show that
\begin{equation*}
	\|f*g \mid F_{\infty,q}^s\| \leq \|f \mid F_{\infty,q}^s\| \cdot \|g \mid L_1\|.
\end{equation*}
As in the definition of the Triebel--Lizorkin space $F_{\infty,q}^s$, we set $Q_{J,M} := 2^{-J}M + (0,2^{-J})^n$ for $J \geq 0$, $M \in \mbb{Z}^n$ and write $\sint_Q f(x) \, dx := {\lambda(Q)}^{-1} \int_Q f(x) \, dx$. Using \eqref{conv-eq15} and monotone convergence, we get
\begin{align*}
	\|f*g \mid F_{\infty,q}^s\|
	&= \sup_{J \geq 0, M \in\mbb{Z}^n} \left( \sint_{Q_{J,M}} \sum_{k=J}^{\infty} |2^{ks} \phi_k(D)(f*g)(x)|^q \, dx \right)^{1/q} \\
	&= \sup_{J \geq 0, M \in \mbb{Z}^n} \left( \sum_{k=J}^{\infty} 2^{ksq} \sint_{Q_{J,M}} |(\phi_k(D)f)*g(x)|^q \, dx \right)^{1/q}.
\end{align*}
Jensen's inequality shows that
\begin{align*}
	|(\phi_k(D)f)*g|^q(x) \leq \|g \mid L_1\|^{q-1} \int_{\mbb{R}^n} |\phi_k(D)f|^q(x-y) \cdot |g(y)| \, dy,
\end{align*}
which implies by Tonelli's theorem
\begin{align*}
	\sint_{Q_{J,M}} |(\phi_k(D)f)*g|^q(x) \, dx \leq \|g \mid L_1\|^{q} \sup_{y \in \mbb{R}^n} \sint_{y+Q_{J,M}} |\phi_k(D)f|^q(x) \, dx.
\end{align*}
Combining this estimate with our identity for $\|f*g \mid F_{\infty,q}^s\|$ and using the fact that
\begin{align} \label{conv-eq19}
	\sup_{J \geq 0, M \in \mbb{Z}^n} \left( \sum_{k=J}^{\infty} 2^{ksq} \sup_{y \in \mbb{R}^n} \sint_{y+Q_{J,M}} |\phi_k(D)f|^q(x) \, dx \right)
	\leq 2^n \|f \mid F_{\infty,q}^s\|,
\end{align}
cf.\ \cite[proof of Theorem 2.3]{knop19}, the assertion follows.

\medskip
Case 4: $A=F$ and $p=\infty$. By \eqref{conv-eq11}, $p_1$ and $p_2$ are conjugate exponents ($p_1' = p_2$, for short), i.e.\ we have to show that
\begin{equation*}
	\|f*g \mid F_{\infty,q}^s\| \leq \|f \mid F_{p_1,q}^s\| \cdot \|g \mid L_{p_1'}\|.
\end{equation*}
For $p_1 = \infty$ we have already proved this in Case 3, and so we may assume $p_1<\infty$. Using Minkowski's integral inequality \eqref{min-counting}, we find that
\begin{align*}
	\left( \sum_{k=J}^{\infty} 2^{ksq} |(\phi_k(D)f)*g|^q(x) \right)^{1/q}
	\leq \int_{\mbb{R}^n} |g(y)| \underbrace{\left(\sum_{k=0}^{\infty} 2^{ksq} |\phi_k(D)f|^q(x-y) \right)^{1/q}}_{=:F(x-y)} \, dy.
\end{align*}
Since an application of H\"older's inequality yields
\begin{equation*}
	(|g|*F)(x)
	\leq \|F \mid L_{p_1}\| \cdot \|g \mid L_{p_1'}\|
	= \|f \mid F_{p_1,q}^s\| \cdot \|g \mid L_{p_1'}\|,
    \quad x \in \mbb{R}^n,
\end{equation*}
we conclude that
\begin{align*}
	\left( \sint_Q \sum_{k=J}^{\infty} 2^{ksq} |\phi_k(D)(f*g)(x)|^q \, dx \right)^{1/q}
	\leq \left( \sint_Q (|g|*F)^q(x) \, dx \right)^{1/q}
	\leq  \|f \mid F_{p_1,q}^s\| \cdot \|g \mid L_{p_1'}\|
\end{align*}
for any dyadic cube $Q \in \mc{B}(\mbb{R}^n)$. Taking the supremum over all cubes $Q$ and all $J \geq 0$ proves the desired inequality.

\secundolist We know from the first part of the proof that \eqref{conv-eq12} holds under the additional assumption that $f$ or $g$ is a Schwartz function, i.e.
\begin{equation}\label{conv-eq13}
	\|f*g \mid A_{p,q}^s\|
	\leq C \|f \mid A_{p_1,q}^s\| \cdot \|g \mid L_{p_2}\|,
	\quad f \in A_{p_1,q}^s,\; g \in L_{p_2}, \; \text{$f \in \mc{S}$ or $g \in \mc{S}$},
\end{equation}
where $C=1$ if $p_1<\infty$ and $C=2^n$ if $p_1=\infty$. We will now show that this implies \eqref{conv-eq12} for all $f,g$ satisfying the assumptions of the theorem.

Fix $f \in A_{p_1,q}^s$ and $g \in L_{p_2}$. By \eqref{conv-eq11}, $p_1<\infty$ or $p_2< \infty$, and so one of the following cases occurs.

\medskip
Case 1: $q<\infty$, $p_2\leq\infty$ and $p_1<\infty$. Then the space of Schwartz functions $\mc{S}(\mbb{R}^n)$ is dense in $A_{p_1,q}^s$, i.e.\ there is a sequence $(f_k)_{k \in \mbb{N}} \subseteq \mc{S}(\mbb{R}^n)$ with $f_k \to f$ in $A_{p_1,q}^s$. From \eqref{conv-eq13}, we see that
\begin{equation*}
	\|f_k *g - f_j * g \mid A_{p,q}^s\|
	= \|(f_k-f_j)*g \mid A_{p,q}^s\|
	\leq C \|f_k -f_j \mid A_{p_1,q}^s\| \cdot \|g \mid L_{p_2}\|,
\end{equation*}
and so $(f_k*g)_{k \in \mbb{N}}$ is a Cauchy sequence in $A_{p,q}^s$. By completeness, the limit $f*g := \lim_{k \to \infty} f_k*g$ exists in $A_{p,q}^s$. Since $A_{p,q}^s$ is continuously embedded into $\mc{S}'(\mbb{R}^n)$, it follows that $f_k*g \to f*g$ in $\mc{S}'(\mbb{R}^n)$ and it is easy to see that the limit does not depend on the approximating sequence $(f_k)_{k \in \mbb{N}}$. By \eqref{conv-eq13},
\begin{equation*}
	\|f_k*g \mid A_{p,q}^s\| \leq C \|f_k \mid A_{p_1,q}^s\| \cdot \|g \mid L_{p_2}\|,
\end{equation*}
and letting $k \to \infty$ proves the convolution estimate  for $f$ and $g$. \par

\medskip
Case 2:  $q \leq \infty$, $p_2<\infty$ and $p_1 \leq \infty$. Then we use that $\mc{S}(\mbb{R}^n)$ is dense in $L_{p_2}$, choose an approximating sequence $(g_k)_{k \in \mbb{N}} \subseteq \mc{S}(\mbb{R}^n)$, $g_k \to g$ in $L_{p_2}$, and argue as in Case 1.

\medskip
Case 3: $q=\infty$, $p_2= \infty$, $p_1<\infty$, and $A=F$. From \eqref{conv-eq11} we see that $p_1=1$ and $p=\infty$, i.e.\ we need to prove the convolution estimate for $f \in F_{1,\infty}^s$ and $g \in L_{\infty}$.

Fix $\eps>0$, then $f \in F_{1,\infty}^s \subseteq F_{1,\theta}^{s-\eps}$ for any $\theta>0$, and so there is a sequence $(f_i)_{k \in \mbb{N}} \subseteq \mc{S}(\mbb{R}^n)$ such that $f_i \to f$ in $F_{1,\theta}^{s-\eps} \hookrightarrow\mc{S}'(\mbb{R}^n)$. Thus, by Case 1, $f*g = \lim_{i \to \infty} f_i*g$ exists in $F_{\infty,\theta}^{s-\eps} \hookrightarrow \mc{S}'(\mbb{R}^n)$.\footnote{This idea to reduce things to a finite $p_2$ at the price of a slightly smaller smoothness index is from \cite[Proof of Theorem 2]{burenkov90}.} With exactly the same argument as in Step \primo, Case 4, we find for any $N \in \mbb{N}$ and any cube $Q \in \mc{B}(\mbb{R}^n)$ that
\begin{equation*}
	\left( \sint_{Q} \sum_{k=J}^N 2^{k(s-\eps)\theta} |\phi_k(D)(f_i*g)(x)|^{\theta} \, dx \right)^{1/\theta}
	\leq \| 2^{k(s-\eps)} \phi_k(D) f_i \mid \ell_{\theta}(0;N) \mid L_1\| \cdot \|g \mid L_{\infty}\|;
\end{equation*}
here $\|a_k \mid \ell_{\theta}(J;N)\| := \left( \sum_{k=J}^N |a_k|^{\theta} \right)^{1/\theta}$. As $f_i \to f$ in $F_{1,\theta}^{s-\eps}$ and $f_i*g \to f*g$ in $F_{\infty,\theta}^{s-\eps}$,  we can let $i \to \infty$ to get
\begin{equation}\label{conv-eq18}
	\left( \sint_{Q} \sum_{k=J}^N 2^{k(s-\eps)\theta} |\phi_k(D)(f*g)(x)|^{\theta} \, dx \right)^{1/\theta}
	\leq \| 2^{k(s-\eps)} \phi_k(D) f \mid \ell_{\theta}(0;N) \mid L_1\| \cdot \|g \mid L_{\infty}\|.
\end{equation}
Noting that \begin{align}\label{conv-eq20}
	\|2^{k(s-\eps)} \phi_k(D) f(x)\mid \ell_{\theta}(0;N)\|
	&\leq \|2^{ks} \phi_k(D) f(x)\mid \ell_{\theta}(0;N)\| \notag \\
	&\leq (N+1) \|2^{ks} \phi_k(D) f(x)\mid \ell_{\infty}\|  \in L^1(dx),
\end{align}
an application of Fatou's lemma on the left-hand side of \eqref{conv-eq18} and the dominated convergence theorem on the right-hand side of \eqref{conv-eq18} yield as $\epsilon\to 0$
\begin{align*}
	\left( \sint_{Q} \sum_{k=J}^N 2^{ks\theta} |\phi_k(D)(f*g)(x)|^{\theta} \, dx \right)^{1/\theta}
	&\leq \liminf_{\eps \to 0} \| 2^{k(s-\eps)} \phi_k(D) f \mid \ell_{\theta}(0;N) \mid L_1\| \cdot \|g \mid L_{\infty}\| \\
	&= \| 2^{ks} \phi_k(D) f \mid \ell_{\theta}(0;N) \mid L_1\| \cdot \|g \mid L_{\infty}\|<\infty.
\end{align*}
Recall that $\|v \mid L_{\infty}(\mu)\| = \lim_{\theta \to \infty} \|v \mid L_{\theta}(\mu)\|$ for any measure $\mu$ and any $v$ such that $v \in L_{\theta}(\mu)$ for large $\theta \gg 1$, see e.g.\ \cite[Exercise 13.21]{mims}; this allows us to let $\theta \to \infty$ on the left-hand side of the previous inequality. For the right-hand side we use once more the dominated convergence theorem, which is applicable because of \eqref{conv-eq20}. Thus, we arrive at
\begin{align*}
	\left\| 2^{ks} \phi_k(D)(f*g) \mid L_{\infty}(Q) \mid \ell_{\infty}(J;N)\right\|
	&\leq \|2^{ks} \phi_k(D) f \mid \ell_{\infty}(0;N) \mid L_1\|\cdot \|g \mid L_{\infty}\| \\
	&\leq \|f \mid F_{1,\infty}^s\| \cdot \|g \mid L_{\infty}\|.
\end{align*}
Using the monotone convergence theorem, we can let $N \to \infty$ and conclude that
\begin{equation*}
	\|f*g \mid F_{\infty,\infty}^s\|
	\leq \|f \mid F_{1,\infty}^s\| \cdot \|g \mid L_{\infty}\|.
\end{equation*}

\medskip
Case 4: $q=\infty$, $p_2= \infty$, $p_1<\infty$, and $A=B$. This follows with a similar -- and even simpler -- argument along the lines of Case 3, see \cite[Proof of Theorem 2]{burenkov90}.
\end{proof}

The second main result in this section is the following theorem.

\begin{theorem} \label{conv-3}
	Let $A \in \{B,F\}$ and $s,u \in \mbb{R}$. Let $q,q_1,q_2 \in (0,\infty]$ \textup{(}resp.\ $q,q_1,q_2 \in [1,\infty]$ for the $F$-scale\textup{)} and $p,p_1,p_2 \in [1,\infty]$ be such that
	\begin{equation}
		\frac{1}{q} \leq \frac{1}{q_1} + \frac{1}{q_2}
		\quad \text{and} \quad
		1+ \frac{1}{p} = \frac{1}{p_1}+\frac{1}{p_2}.
		\label{conv-eq21}
	\end{equation}
	If $f \in A_{p_1,q_1}^s$ and $g \in A_{p_2,q_2}^u$, then $f*g \in A_{p,q}^{s+u}$ and
	\begin{equation*}
		\|f*g \mid A_{p,q}^{s+u}\| \leq C \|f \mid A_{p_1,q_1}^s\| \cdot \|g \mid A_{p_2,q_2}^u\|
	\end{equation*}
    for some constant $C>0$, which depends only on the dyadic resolution $(\phi_k)_{k \in \mbb{N}_0}$ from the definition of $A_{p,q}^r$ and, additionally, on the dimension $n \in \mbb{N}$ if $\max\{p_1,p_2\}=\infty$.
\end{theorem}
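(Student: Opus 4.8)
The plan is to run the estimate on the frequency-localized building blocks, exactly as in the proof of Theorem~\ref{conv-1}, and to collapse the genuinely bilinear estimate to a single convolution by means of the identity \eqref{conv-eq15}. First I would settle the question of well-definedness and pass from the building blocks to general $f,g$ by density, mirroring part~\secundo\ of the proof of Theorem~\ref{conv-1}: assuming first that $f$ or $g$ is a Schwartz function, I prove the inequality for the localized pieces and then extend it using completeness of $A_{p,q}^{s+u}$ and the continuous embedding into $\mc{S}'(\mbb{R}^n)$. The crucial structural input is that convolution localizes frequencies to the \emph{intersection} of the supports: writing $f_j:=\phi_j(D)f$ and $g_l:=\phi_l(D)g$, the Fourier transform of $f_j*g_l$ lives in $\supp\phi_j\cap\supp\phi_l$, which is empty unless $|j-l|\le 1$. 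Combined with \eqref{conv-eq15} this yields, for every $k$,
\[ \phi_k(D)(f*g)=(\phi_k(D)f)*g=(\phi_k(D)f)*G_k, \qquad G_k:=\sum_{|l-k|\le 1}g_l, \]
since the terms with $|l-k|\ge 2$ vanish. Thus no paraproduct-type low-frequency interaction occurs and everything stays on the diagonal $l\approx k$.

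For the Besov scale this already finishes the proof. Bounding $2^{k(s+u)}\|\phi_k(D)(f*g)\mid L_p\|$ by Young's inequality \eqref{young} together with the relation in \eqref{conv-eq21} produces the product of $2^{ks}\|\phi_k(D)f\mid L_{p_1}\|$ and $2^{ku}\|G_k\mid L_{p_2}\|$; since $2^{ku}\|G_k\mid L_{p_2}\|\le 2^{|u|}\sum_{|l-k|\le 1}2^{lu}\|g_l\mid L_{p_2}\|$, taking the $\ell_q$-quasi-norm in $k$ and applying H\"older's inequality for sequences (the condition $1/q\le 1/q_1+1/q_2$ being absorbed through the embedding $\ell_{\tilde q}\hookrightarrow\ell_q$) separates the two factors into $\|f\mid B^s_{p_1,q_1}\|$ and $\|g\mid B^u_{p_2,q_2}\|$. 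No restriction on $q,q_1,q_2$ beyond \eqref{conv-eq21} is needed here.

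For the Triebel--Lizorkin scale with $p<\infty$ the same reduction gives the pointwise bound $2^{k(s+u)}|\phi_k(D)(f*g)|\le \alpha_k*\beta_k$ with $\alpha_k:=2^{ks}|\phi_k(D)f|$ and $\beta_k:=2^{|u|}\sum_{|l-k|\le 1}2^{lu}|g_l|$, so the heart of the matter is the \emph{diagonal convolution inequality}
\[ \big\|\alpha_k*\beta_k\mid \ell_q\mid L_p\big\|\le \big\|\,\|\alpha_k\mid\ell_{q_1}\|\mid L_{p_1}\big\|\cdot\big\|\,\|\beta_k\mid\ell_{q_2}\|\mid L_{p_2}\big\| \]
for nonnegative families, valid whenever $q\ge 1$, $1/q\le 1/q_1+1/q_2$ and $1+1/p=1/p_1+1/p_2$. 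I would prove it by applying Minkowski's integral inequality \eqref{min-counting} to pull the $\ell_q(k)$-norm inside the $y$-integral defining $\int\alpha_k(x-y)\beta_k(y)\,dy$, then H\"older's inequality in $k$ with the conjugate exponents $q_1/q,\,q_2/q$ to get the pointwise estimate $(\sum_k(\alpha_k*\beta_k)^q)^{1/q}\le A*B$ with $A:=\|\alpha_k\mid\ell_{q_1}\|$ and $B:=\|\beta_k\mid\ell_{q_2}\|$, and finally Young's inequality \eqref{young} in $x$. This is precisely where the hypothesis $q,q_1,q_2\ge 1$ for the $F$-scale enters. Note that $p<\infty$ together with \eqref{conv-eq21} forces $p_1,p_2<\infty$, so no dimensional constant arises in this range.

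The main obstacle is the case $p=\infty$, i.e. the space $F_{\infty,q}^{s+u}$, and in particular its endpoints $p_1=\infty$ or $p_2=\infty$ (the conjugate exponents forced by \eqref{conv-eq21}). There the $L_p(\ell_q)$-structure is replaced by the supremum over dyadic cubes of the averages $\sint_{Q_{J,M}}\sum_{k\ge J}|\cdots|^q$, and the clean Young step is unavailable. I would follow the strategy of Cases~3 and~4 in the proof of Theorem~\ref{conv-1}: reduce the cube averages of $\alpha_k*\beta_k$ to a supremum over translated cubes via a Jensen/Tonelli argument and the fact \eqref{conv-eq19}, which costs the dimensional factor $2^n$; and, when an integrability parameter is infinite, first trade a small amount of smoothness (replacing $s$ by $s-\eps$, or $u$ by $u-\eps$) to reduce to a finite exponent, run the truncated estimate up to level $N$, and then recover the full statement by Fatou's lemma, dominated convergence as $\eps\to 0$, the limit relation $\|\bullet\mid L_\infty\|=\lim_{\theta\to\infty}\|\bullet\mid L_\theta\|$, and monotone convergence as $N\to\infty$. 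Reconciling the $k$-dependent cube averaging in the $F_{\infty,q}$-norm with the bilinear diagonal structure, while keeping all constants uniform, is the technically delicate point; everything else reduces to Young's inequality, Minkowski's integral inequality, and H\"older's inequality for sequences.
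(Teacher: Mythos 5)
Your proposal is correct, and its analytic engine is the same as the paper's: reduction by density to the case where $f$ or $g$ is Schwartz, Young's inequality plus H\"older in $k$ for the $B$-scale, the pointwise Minkowski--H\"older--Young chain for the $F$-scale with finite exponents (your observation that $p<\infty$ forces $p_1,p_2<\infty$ is correct and used implicitly there too), and the translated-cube estimate \eqref{conv-eq19} with its dimensional factor when $p=\infty$. The genuine difference is the decomposition. You keep the original resolution and exploit the support-overlap identity $\phi_k(D)(f*g)=(\phi_k(D)f)*G_k$ with $G_k=\sum_{|l-k|\leq 1}\phi_l(D)g$; the paper instead tests $f*g$ against $\phi_k^2$, for which the convolution factors \emph{exactly} on the diagonal, $\phi_k^2(D)(f*g)=(\phi_k(D)f)*(\phi_k(D)g)$, cf.\ \eqref{conv-eq25}, and then uses that $(\phi_k^2)_{k\in\mbb{N}_0}$ is an admissible system generating an equivalent quasi-norm, cf.\ \eqref{conv-eq26}. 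The trade-off is real: the paper's route splits the weight $2^{k(s+u)}=2^{ks}\,2^{ku}$ cleanly, with no neighbour sums, but \eqref{conv-eq26} requires the equivalence of quasi-norms for systems that are not partitions of unity, which in the delicate case $F_{\infty,q}$ rests on nontrivial Fourier-multiplier results (see the footnote in Section~\ref{fun}, citing Park); your route needs no norm-equivalence input at all and is in this sense more elementary and self-contained, at the price of the factors $3\cdot 2^{|u|}$ (and a $q_2$-dependent constant if $q_2<1$ in the $B$-scale) coming from $G_k$, so your constant depends on $u$ --- formally weaker than the statement's claim that $C$ depends only on the resolution and $n$, although a dependence on $s+u$ is likewise hidden in the constant of \eqref{conv-eq26}. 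One small misplacement in your outline: the $\eps$-smoothness-trading/Fatou/monotone-convergence machinery you invoke for the $p=\infty$ cases belongs, in the paper, only to the density step (the second part of the proof of Theorem~\ref{conv-1}, which both you and the paper import wholesale); the core bilinear estimate for $F_{\infty,q}$ needs no limiting argument --- it is proved directly, for every Borel set $Q$ of finite measure, by Minkowski's inequality, H\"older on $Q\times\{k\geq J\}$ with exponents $q_1/q$ and $q_2/q$, and \eqref{conv-eq19}.
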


The condition on the $p_i$ is again due to Young's inequality while the condition on the $q_i$ is needed in order to apply the H\"older inequality. For the particular case $q=q_1$ and $p=p_1$, the convolution estimate  reads
\begin{equation}
	\|f*g \mid A_{p,q}^{s+u}\| \leq C \|f \mid A_{p,q}^s\| \cdot \|g \mid A_{1,\infty}^{u}\|,
	\label{conv-eq23}
\end{equation}
and this estimate turns out to be a useful tool to study smoothing properties of convolution semigroups, see Section~\ref{semi}. For $u=0$ and $A=F$, we have
\begin{equation*}
	\|f*g \mid F_{p,q}^{s}\| \leq C \|f \mid F_{p,q}^s\| \cdot \|g \mid F_{1,\infty}^{0}\|,
\end{equation*}
which is an interesting complement to \eqref{conv-eq12} because $L_1 \not \subseteq F_{1,\infty}^0$. Theorem~\ref{conv-3} is known for the $B$-scale, cf.\ \cite[Theorem 3]{burenkov90}, but for the readers' convenience we also include the very short proof of this case.

\begin{proof}[Proof of Theorem~\ref{conv-3}]
    Since the norm $\|f \mid A_{p,q}^s\|$ is decreasing in $q$, we may assume that $\frac{1}{q} =\frac{1}{q_1}+\frac{1}{q_2}$. We will restrict our attention to the case $q,q_1,q_2<\infty$; if (at least) one of these numbers is infinite, the proof is similar and even becomes a bit simpler. Moreover, it follows as in the proof of Theorem~\ref{conv-1} that we may assume additionally that $f$ or $g$ is a Schwartz function; in either case, it holds that
	\begin{equation}
		\phi_k^2(D)(f*g) = (\phi_k(D)f)*(\phi_k(D)g) \label{conv-eq25}
	\end{equation}
    for the dyadic resolution $(\phi_k)_{k \in \mbb{N}_0}$ from the definition of the function spaces. Furthermore, we note that the sequence $(\phi_k^2)_{k \in \mbb{N}_0}$ is also a dyadic resolution, cf.\ \cite[proof of Theorem 3]{burenkov90} and the footnote on page~\pageref{blind-label}, and so
	\begin{align} \label{conv-eq26} \begin{aligned}
		\|h \mid B_{p,q}^{s+u}\| &\leq C \|2^{k(s+u)} \phi_k^2(D)h \mid L_p \mid \ell_q\|, \quad h \in B_{p,q}^{s+u} \\
		\|h \mid F_{p,q}^{s+u}\| &\leq C \|2^{k(s+u)} \phi_k^2(D) h \mid \ell_q \mid L_p\|, \quad h \in F_{p,q}^{s+u},\; p< \infty,
    \end{aligned}\end{align}
	for a uniform constant $C>0$.

	\primolist Case 1: $A=B$. By \eqref{conv-eq25}, \eqref{conv-eq26} and Young's inequality, we have
	\begin{align*}
		\|f*g \mid B_{p,q}^{s+u}\|
		&\leq C \|2^{k(s+u)} \phi_k^2(D)(f*g) \mid L_p \mid \ell_q\| \\
		&= C \|2^{k(s+u)} (\phi_k(D)f)*(\phi_k(D)g) \mid L_p \mid \ell_q\| \\
		&\leq C \big\| \|2^{ks} \phi_k(D)f \mid L_{p_1}\| \cdot \|2^{ku} \phi_k(D)g \mid L_{p_2}\| \mid \ell_q \big\|.
	\end{align*}
	Applying H\"older's inequality proves the assertion.

    \secundolist Case 2: $A=F$ and $p,p_1,p_2 < \infty$. From Minkowski's integral inequality \eqref{min-counting} and H\"older's inequality, we find for each $x \in \mbb{R}^n$ that
	\begin{align*}
		\|2^{k(s+u)} (\phi_k(D) f)*(\phi_k(D)g)(x) \mid \ell_q\|
		&\leq \int_{\mbb{R}^n} \|2^{k(s+u)} \phi_k(D)f(x-y) \cdot\phi_k(D)g(y) \mid \ell_q\| \, dy \\
		&\leq \int_{\mbb{R}^n} \|2^{ks} \phi_k(D) f(x-y) \mid \ell_{q_1}\| \cdot \|2^{ku} \phi_k(D) g(y) \mid \ell_{q_2}\| \, dy.
	\end{align*}
	Thus, by Young's inequality \eqref{young},
	\begin{align*}
		\|2^{k(s+u)} (\phi_k(D) f)*(\phi_k(D)g) \mid \ell_q \mid L_p\|
		&\leq \|2^{ks} \phi_k(D) f \mid \ell_{q_1} \mid L_{p_1}\| \cdot  \|2^{ku} \phi_k(D) g \mid \ell_{q_2} \mid L_{p_2}\| \\
		&= \|f \mid F_{p_1,q_1}^s\| \cdot \|g \mid F_{p_2,q_2}^u\|,
	\end{align*}
	which proves the assertion thanks to our earlier observations \eqref{conv-eq25}, \eqref{conv-eq26}.

    \tertiolist Case 3: $A=F$ and $\max\{p_1,p_2\}=\infty$. Since $f$ and $g$ play symmetric roles, it is enough to consider the case $p_1 = \infty$. Then, by \eqref{conv-eq21}, $p=\infty$ and $p_2=1$, i.e.\ we need to prove that
	\begin{align*}
		\|f*g \mid F_{\infty,q}^{s+u}\| \leq C \|f \mid F_{\infty,q_1}^s\| \cdot \|g \mid F_{1,q_2}^u\|.
	\end{align*}
	For every Borel set $Q \in \mc{B}(\mbb{R}^n)$ and every $J \geq 0$, Minkowski's integral inequality \eqref{min} shows that
	\begin{align*}
		&\left( \int_Q \sum_{k=J}^{\infty} |2^{k(s+u)} (\phi_k(D)f * \phi_k(D)g)(x)|^q \, dx \right)^{1/q} \\
	    &\quad\leq \int_{\mbb{R}^n} \left( \int_Q \sum_{k=J}^{\infty} |2^{k(s+u)} \phi_k(D) f(x-y) \cdot \phi_k(D) g(y)|^q \ dx \right)^{1/q} \, dy.
	\end{align*}
	H\"older's inequality gives the following upper bound for the integrand on the right-hand side:
	\begin{align*}
		&\left( \int_Q \sum_{k=J}^{\infty} |2^{k(s+u)} \phi_k(D) f(x-y) \cdot\phi_k(D) g(y)|^q \ dx \right)^{1/q} \\
        &\quad\leq \left( \int_Q \sum_{k=J}^{\infty} |2^{ks} \phi_k(D) f(x-y)|^{q_1} \, dx \right)^{1/q_1} \left( \int_Q \sum_{k=J}^{\infty} |\phi_k(D) g(y)|^{q_2} \, dx \right)^{1/q_2} \\
        &\quad\leq \sup_{z \in \mbb{R}^n} \left( \sum_{k=J}^{\infty} 2^{ksq} \int_{z+Q} |\phi_k(D) f(x)|^{q_1} \, dx \right)^{1/q_1} \lambda(Q)^{1/q_2} \|2^{ku} \phi_k(D) g(y) \mid \ell_{q_2}\|.
	\end{align*}
    Combining this with the previous estimate, dividing both sides by $\lambda(Q)^{1/q} = \lambda(Q)^{1/q_1} \lambda(Q)^{1/q_2}$ and using \eqref{conv-eq19}, we arrive at
	\begin{align*}
		&\left( \sum_{k=J}^{\infty} \sint_Q 2^{k(s+u)} \phi_k^2(D)(f*g)(x)|^q \, dx \right)^{1/q} \\
        &\quad\leq \|2^{ku} \phi_k(D) g \mid \ell_{q_2} \mid L_1\| \cdot \sup_{z \in \mbb{R}^n} \left( \sum_{k=J}^{\infty} 2^{ksq} \sint_{z+Q} |\phi_k(D) f(x)|^{q_1} \, dx \right)^{1/q_1} \\
		&\quad\leq \|g \mid F_{1,q_2}^u\| \cdot \|f \mid F_{\infty,q_1}^s\|
	\end{align*}
    for every $Q \in \mc{B}(\mbb{R}^n)$ with $\lambda(Q)<\infty$; hence, for every cube $Q_{J,M} := 2^{-J} M + (0,2^{-J})^n$. Since $(\phi_k^2)_{k \in \mbb{N}_0}$ is an admissible dyadic resolution, there is a uniform constant $C>0$ such that
	\begin{equation*}
		\|f*g \mid F_{\infty,q}^{s+u}\|
	    \leq C \sup_{J \geq 0, M \in \mbb{Z}^n} \left( \sum_{k=J}^{\infty} \sint_{Q_{J,M}} |2^{k(s+u)} \phi_k^2(D)(f*g)(x)|^q \, dx \right)^{1/q},
	\end{equation*}
	and the assertion follows.

    \quartolist Case 4: $A=F$ and $p=\infty$. From \eqref{conv-eq21}, we see that $p_2$ and $p_1$ are conjugate exponents, i.e.\ the assertion reads
	\begin{equation*}
		\|f*g \mid F_{\infty,q}^{s+u}\| \leq C \|f \mid F_{p_1,q_1}^s\| \cdot \|g \mid F_{p_1',q_2}^u\|.
	\end{equation*}
	We have already shown in the previous step that this inequality holds if $p_1 = \infty$, and so it remains to consider the case $p_1<\infty$. Applying H\"older's inequality twice and invoking Minkowski's integral inequality \eqref{min-counting}, we get
	\begin{align*}
		\left( \sum_{k=J}^{\infty} |2^{k(s+u)} (\phi_k(D) f*\phi_k(D) g)(x)|^q \right)^{1/q}
		&\leq \|2^{k(s+u)} \phi_k(D) f(x-\bullet) \phi_k(D) \cdot g(\bullet) \mid \ell_q \mid L_1\| \\
        &\leq \big\| \|2^{ks} \phi_k(D) f(x-\bullet) \mid \ell_{q_1} \| \cdot \|2^{ku} \phi_k(D) g(\bullet) \mid \ell_{q_2}\|  \mid L_1\big\| \\
		&\leq \|2^{ks} \phi_k(D) f \mid \ell_{q_1} \mid L_{p_1}\| \cdot \|2^{ku} \phi_k(D) g\mid \ell_{q_2} \mid L_{p_1'}\| \\
		&= \|f \mid F_{p_1,q_1}^s\| \cdot \|g \mid F_{p_1',q_2}^u\|
	\end{align*}
	for all $x \in \mbb{R}^n$. Thus,
	\begin{align*}
		\left( \sum_{k=J}^{\infty} \sint_Q |2^{k(s+u)} \phi_k^2(D)(f*g)(x)|^q \, dx \right)^{1/q}
		&= \left( \sint_Q \sum_{k=J}^{\infty} |2^{k(s+u)} (\phi_k(D)f * \phi_k(D) g)(x)|^q \, dx \right)^{1/q} \\
		&\leq \|f \mid F_{p_1,q_1}^s\| \cdot \|g \mid F_{p_1',q_2}^u\|
	\end{align*}
	for every Borel set $Q \in \mc{B}(\mbb{R}^n)$ with $\lambda(Q)<\infty$. By our earlier considerations, this proves the assertion, compare the conclusion of Case 3.
\end{proof}

\section{Regularity of convolution semigroups} \label{semi}

The convolution inequalities from the previous section are useful to study mapping properties of convolution semigroups, considered as operators on the function spaces $A_{p,q}^s$, $A \in \{B,F\}$. We start with some basic material on convolution semigroups.

\begin{definition} \label{semi-1}
    Let $p \in [1,\infty]$, and let $P_t: L_p \to L_p$, $t \geq 0$, be a family of linear operators. If there exists for every $t \geq 0$ some (possibly signed) measure $\mu_t$ such that $P_t f = f*\mu_t$, then $(P_t)_{t \geq 0}$ is a \emph{family of convolution operators}. If, additionally, $P_0=\id$ and $P_{t+s} = P_t P_s$ for all $s,t \geq 0$, then $(P_t)_{t \geq 0}$ is a \emph{convolution semigroup}.
\end{definition}

Here, we restrict our attention to convolution operators $P_t f = f*\mu_t$ with $\mu_t(dy) = p_t(y) \, dy$ for $p_t \in L_1$. Note that the kernels $p_t$ need not be non-negative. If $P_t$ is a convolution operator with kernel $p_t$, then, by Young's inequality \eqref{young},
\begin{equation*}
	\|P_tf \mid L_p\| \leq \|p_t \mid L_1\| \cdot \|f \mid L_p\|, \quad p \in [1,\infty],
\end{equation*}
and so $p_t \in L_1$ implies that $P_t:L_p\to L_p$ is a bounded operator\footnote{In general, integrability of the kernel is not necessary for a convolution operator to be bounded. For instance, if $p=2$, then $Pf:=f*g$ with $g(x):=\frac{1}{x} \sin x$ defines, by Plancherel's theorem, a bounded operator on $L_2$ but $g \notin L_1$.}. The convolution inequalities from Section~\ref{conv} allow us to study the mapping properties of the convolution operator $P_t$ acting on the function spaces $A_{p,q}^s$. By Theorem~\ref{conv-1} and \ref{conv-3}, we have
\begin{align}
	\|P_t f \mid A_{p,q}^s\| \leq C \|p_t \mid L_1\| \cdot \|f \mid A_{p,q}^s\|
	\quad \text{and} \quad
	\|P_t f \mid A_{p,q}^{s+u}\| \leq C' \|p_t \mid A_{1,\infty}^u\| \cdot \|f \mid A_{p,q}^{s}\|,
	\label{semi-eq5}
\end{align}
which means that $p_t \in L_1$ (resp.\ $p_t \in A_{1,\infty}^u$) implies that $P_t$ defines a bounded linear operator on $A_{p,q}^s$ (resp.\ from $A_{p,q}^s$ to $A_{p,q}^{s+u}$). Consequently, the main object of interest are the norms $\|p_t \mid A_{p,q}^s\|$ -- having estimates for these norms at hand, the convolution inequalities from Section~\ref{conv} yield immediately bounds for $\|P_t f \mid A_{\tilde{p},\tilde{q}}^{\tilde{s}}\|$. \par
The first aim of this section is to show that the kernels $p_t$, $t>0$, of a convolution semigroup are self-regularizing in time, in the sense that the regularity which has been reached at some time $T>0$ is preserved for all larger times $t>T$. More precisely, we will see that $p_T \in A_{p,q}^s$ for some $T>0$ implies that $p_t \in A_{p,q}^s$ for all $t>T$ provided that $p_t$ is integrable for $t>0$. The proof is based on the fact that the kernels of a convolution semigroup satisfy the so-called \emph{Chapman--Kolmogorov equations}; for the readers' convenience we state the result and give the proof.

\begin{lemma} \label{semi-3}
	Let $(p_t)_{t>0} \subseteq L_1$. Denote by $P_t f:=f*p_t$, $t>0$, the corresponding family of convolution operators and set $P_0 = \id$. The family $(P_t)_{t \geq 0}$ is a convolution semigroup if, and only if, $(p_t)_{t>0}$ satisfies the Chapman--Kolmogorov equations
	\begin{equation}
		p_{t+s}(x) = p_s * p_t(x) = \int_{\mbb{R}^n} p_t(y) p_s(x-y) \, dy \quad \text{a.e.} \label{semi-eq7}
	\end{equation}
	for all $s,t>0$; the exceptional Lebesgue null set may depend on $s$, $t$.
\end{lemma}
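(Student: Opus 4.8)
The plan is to exploit the fact that, since $P_0 = \id$ is built into the hypotheses, the semigroup property $P_{t+s} = P_t P_s$ only needs to be verified for $s,t>0$ (the cases $s=0$ or $t=0$ being automatic), and to translate it into a statement about the kernels via the associativity of convolution together with the injectivity of the map $h \mapsto (f \mapsto f*h)$ on $L_1$. First I would record this associativity: for $f \in L_p$ and $g,h \in L_1$ one has $(f*g)*h = f*(g*h)$ almost everywhere. This follows from Young's inequality \eqref{young}, which guarantees that all convolutions in sight exist as $L_p$, resp.\ $L_1$, functions, together with Tonelli's theorem, which shows that the relevant double integral converges absolutely for a.e.\ $x$, so that Fubini applies. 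Applying this with $g=p_s$ and $h=p_t$ yields, for every $f \in L_p$,
\[
  P_t P_s f = (f*p_s)*p_t = f*(p_s*p_t), \qquad P_{t+s} f = f*p_{t+s}.
\]

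For the \emph{if} direction I would then simply observe that the Chapman--Kolmogorov identity $p_{t+s} = p_s*p_t$ (a.e.) makes the two right-hand sides above coincide, whence $P_t P_s = P_{t+s}$ on $L_p$; combined with $P_0 = \id$ this is precisely the definition of a convolution semigroup.

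For the \emph{only if} direction I would set $h := p_{t+s} - p_s*p_t$, which lies in $L_1$ by Young's inequality, and read off from the displayed identities that the semigroup property forces $f*h = 0$ for every $f \in L_p$. The crux is to deduce $h=0$ a.e.\ from this. The cleanest route is an approximate--identity argument: choosing a mollifier $(\psi_{\eps})_{\eps>0} \subseteq C_c^{\infty}(\mbb{R}^n) \subseteq L_p$, one has $\psi_{\eps}*h = 0$ for each $\eps$, while $\psi_{\eps}*h \to h$ in $L_1$ as $\eps \to 0$; hence $h=0$ in $L_1$, i.e.\ $p_{t+s} = p_s*p_t$ a.e. Equivalently, one may test against a Gaussian $f$, whose Fourier transform $\mc{F}f$ vanishes nowhere, so that $\mc{F}(f*h) = (2\pi)^{n/2}\, \mc{F}f \cdot \mc{F}h = 0$ forces $\mc{F}h=0$ and the injectivity of $\mc{F}$ on $L_1$ gives $h=0$. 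Since this argument is carried out for each fixed pair $s,t>0$, the exceptional null set is permitted to depend on $s$ and $t$, as stated.

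I expect the only real obstacle to be bookkeeping rather than depth: one must make sure the associativity of convolution is genuinely justified (via Tonelli/Fubini, using $p_s,p_t \in L_1$ and $f \in L_p$) and that the chosen test functions $\psi_{\eps}$ really lie in the domain $L_p$ of the operators $P_t$ — both of which hold uniformly in $p \in [1,\infty]$ because $C_c^{\infty}(\mbb{R}^n) \subseteq L_p$ for every $p$.
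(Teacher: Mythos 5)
Your proof is correct, and it follows the same overall skeleton as the paper's -- both directions rest on Fubini/Tonelli together with Young's inequality, and the ``only if'' direction comes down to identifying an $L_1$ kernel from its action as a convolution operator -- but the two key steps are closed with genuinely different lemmas. For the ``if'' direction, the paper first proves $P_{t+r}f = P_tP_rf$ for bounded measurable $f$ (where Fubini is immediate) and then extends to all of $L_p$ by a density-and-boundedness argument; you instead prove the associativity $(f*p_s)*p_t = f*(p_s*p_t)$ directly for $f \in L_p$, with Young's inequality supplying the a.e.\ absolute convergence needed for Fubini, which avoids the approximation step entirely and is uniform in $p$, including $p=\infty$. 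For the ``only if'' direction, the paper tests the operator identity against $f \in C_c^\infty(\mbb{R}^n)$, upgrades the a.e.\ identity $p_{t+r}*f = (p_r*p_t)*f$ to an everywhere identity by continuity, evaluates at the origin, and invokes the fundamental lemma of the calculus of variations; you set $h := p_{t+s}-p_s*p_t \in L_1$ and conclude $h=0$ either by an approximate identity ($\psi_\eps*h=0$ while $\psi_\eps*h \to h$ in $L_1$) or by injectivity of the Fourier transform on $L_1$ after testing against a Gaussian. All of these finishing moves are standard and equally rigorous; your mollifier route is arguably the most economical, while the paper's variant makes explicit that equality of the operators on test functions amounts to equality of the kernels as distributions. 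You were also right to flag, and to verify, that the chosen test functions lie in $L_p$ for every $p \in [1,\infty]$, so the argument is insensitive to which exponent $p$ is fixed in the definition of the semigroup.
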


\begin{proof}
	If $(p_t)_{t>0}$ satisfies the Chapman--Kolmogorov equations \eqref{semi-eq7}, then an application of Fubini's theorem shows that $P_{t+r} f = P_t P_r f$ a.e.\ for every bounded measurable function $f \in B_b(\mbb{R}^n)$ and any $t,r \geq 0$. By Young's inequality, each $P_r$, $r \geq 0$, is a bounded linear operator on $L_p$, and  so the identity $P_{t+r}f=P_t P_r f$ extends from $f \in L_p \cap B_b(\mbb{R}^n)$ to $f \in L_p$, $p \in [1,\infty]$, using a standard approximation argument.  \par
	Conversely, if $(P_t)_{t \geq 0}$ satisfies the semigroup property, then Fubini's theorem gives $p_{t+r}*f = q_{r,t}*f$ a.e. for $f \in C_c^{\infty}(\mbb{R}^n)$, where
	\begin{equation*}
		q_{r,t}(x) := \int_{\mbb{R}^n} p_r(x-y) p_t(y) \, dy = p_r*p_t(x).
	\end{equation*}
	 Noting that $p_{t+r}*f$ and $q_{r,t}*f$ are continuous, we see that $(p_{t+r}*f)(0)= (q_{r,t}*f)(0)$, i.e.\
	\begin{equation*}
		\int_{\mbb{R}^n} p_{t+r}(y) f(-y) \, dy
		= \int_{\mbb{R}^n} q_{r,t}(y) f(-y) \, dy
	\end{equation*}
	for all $f \in C_c^{\infty}(\mbb{R}^n)$, and this implies $p_{t+r}=q_{r,t}$ a.e.
\end{proof}

\begin{corollary} \label{semi-5}
    Let $(p_t)_{t>0} \subseteq L_1$ be the kernels of a convolution semigroup $(P_t)_{t \geq 0}$. Let $A \in \{B,F\}$, $p \in [1,\infty]$, $q  \in (0,\infty]$ \textup{(}$q \geq 1$ if $A=F$\textup{)} and $s \in \mbb{R}$. If $p_T \in A_{p,q}^s$ for some $T>0$, then $p_t \in A_{p,q}^s$ for all $t \geq T$ and
	\begin{equation*}
		\|p_t \mid A_{p,q}^s\| \leq \|p_{t-T} \mid L_1\| \cdot \|p_T \mid A_{p,q}^s\|, \quad t >T.
	\end{equation*}
\end{corollary}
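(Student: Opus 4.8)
The plan is to reduce the statement to the first convolution inequality of Theorem~\ref{conv-1} by invoking the Chapman--Kolmogorov equations of Lemma~\ref{semi-3}. Fix $t>T$ and write $t=(t-T)+T$ with $t-T>0$. Since $(P_t)_{t\geq 0}$ is a convolution semigroup with kernels $(p_t)_{t>0}\subseteq L_1$, Lemma~\ref{semi-3} gives
\begin{equation*}
	p_t = p_{(t-T)+T} = p_{t-T} * p_T \qquad \text{a.e.},
\end{equation*}
so that $p_t$ is realized as the convolution of the $L_1$-kernel $p_{t-T}$ (which lies in $L_1$ by the standing hypothesis, as $t-T>0$) with the regular kernel $p_T\in A_{p,q}^s$.

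Next I would apply Theorem~\ref{conv-1} with the smooth factor $f:=p_T$ and the $L_1$-factor $g:=p_{t-T}$. Here $f\in A_{p,q}^s$ forces the integrability index $p_1=p$, while $g\in L_1$ gives $p_2=1$; the admissibility relation \eqref{conv-eq11} then reads $1+1/p=1/p+1$ and holds for every $p\in[1,\infty]$, and the restriction $q\geq 1$ in the $F$-scale is precisely the hypothesis imposed in the corollary. Theorem~\ref{conv-1} therefore yields $p_t=p_{t-T}*p_T\in A_{p,q}^s$ together with
\begin{equation*}
	\|p_t \mid A_{p,q}^s\| \leq C\,\|p_T \mid A_{p,q}^s\| \cdot \|p_{t-T} \mid L_1\|,
\end{equation*}
which is the claimed estimate. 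For $t=T$ there is nothing to prove, and since the bound holds for every $t>T$, regularity once attained at time $T$ persists for all later times $t\geq T$.

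The only delicate point is the value of the constant $C$. By Theorem~\ref{conv-1}, one has $C=1$ whenever the smooth factor has finite integrability index, i.e.\ whenever $p<\infty$; for the Besov scale the estimate even comes straight from Young's inequality with $C=1$ for all $p$, including $p=\infty$. The genuinely delicate situation is $A=F$ with $p=\infty$, where $f=p_T\in F_{\infty,q}^s$: this is exactly Case~3 in the proof of Theorem~\ref{conv-1}, where the passage from translated cubes $y+Q_{J,M}$ to dyadic cubes (estimate \eqref{conv-eq19}) is responsible for any dimensional factor. I expect matching this endpoint to the clean constant in the statement — rather than the convolution estimate itself — to be the main obstacle; everything else is a direct substitution into Theorem~\ref{conv-1} via the Chapman--Kolmogorov factorization.
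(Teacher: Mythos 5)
Your proposal is correct and is exactly the paper's proof: the authors also invoke the Chapman--Kolmogorov identity of Lemma~\ref{semi-3} to write $p_t = p_T * p_{t-T}$ and then declare the estimate ``immediate from Theorem~\ref{conv-1}''. Your closing observation about the constant at the endpoint $A=F$, $p=\infty$, $q<\infty$ (where Theorem~\ref{conv-1} gives $C=2^n$ rather than $1$) is a legitimate subtlety that the paper's one-line proof silently glosses over, so if anything your write-up is more careful than the original.
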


\begin{proof}
	By Lemma~\ref{semi-3}, we have $p_t = p_T*p_{t-T}$ for all $t>T$, and the assertion is immediate from Theorem~\ref{conv-1}.
\end{proof}

Let us mention that there is no self-regularizing property backwards in time, i.e.\ $p_T \in A_{p,q}^s$ does not imply $p_t \in A_{p,q}^s$ for $t<T$. For instance, it is possible to construct a  convolution semigroup with (positive) kernels $(p_t)_{t>0} \subseteq L_1$ such that $p_t$ is discontinuous for $t<1$ while there is for every $k \in \mbb{N}$ some $T=T(k)>0$ such that $p_t \in C^k$ for $t \geq T(k)$, cf.\ \cite[Example 23.3]{sato} and \cite[Example 23]{knop13}. \par \medskip

For certain kernels $(p_t)_{t>0}$ of convolution semigroups it is known that integrals of higher-order derivatives can be controlled by integrals of the gradient, i.e.\ one has estimates of the form
\begin{equation*}
    \int_{\mbb{R}^n} |\partial^{\beta} p_t(x)| \,dx \leq C \left( \int_{\mbb{R}^n} |\nabla p_t(x)| \, dx \right)^{|\beta|},
    \quad \beta \in \mbb{N}_0^n, \; |\beta| := \sum_{i=1}^n \beta_i,
\end{equation*}
and $p_t \in C^1$, $\nabla p_t \in L_1$ for $t \in (0,T)$, implies $p_t \in C^k$ for all $t>0$; this was used e.g.\ in \cite{reg-levy,euler} to study the regularity of functions related to L\'evy semigroups. Our next result and Theorem~\ref{semi-11} below establish analogous statements for Besov spaces and Triebel--Lizorkin spaces.

\begin{corollary} \label{semi-7}
	Let $(p_t)_{t>0} \subseteq L_1$ be the kernels of a convolution semigroup $(P_t)_{t \geq 0}$. Let $A \in \{B,F\}$, $p \in [1,\infty]$, $q \in (0,\infty]$ \textup{(}$q \geq 1$ if $A=F$\textup{)} and $s \in \mbb{R}$. If there is some $T>0$ such that $p_t \in A_{p,q}^s \cap A_{1,\infty}^s$ for all $t \in (0,T]$, then $p_t \in A_{p,q}^{sk}$ for all $k \in \mbb{N}$ and all $t>0$, and
	\begin{equation}
		\|p_t \mid A_{p,q}^{sk}\|
		\leq C^k \begin{cases}
			\|p_{t/k} \mid A_{1,\infty}^s\|^{k-1} \cdot \|p_{t/k} \mid A_{p,q}^s\|, & \text{if $t \in (0,T]$}, \\
			\|p_{T/k} \mid A_{1,\infty}^s\|^{k-1} \cdot \|p_{T/k} \mid A_{p,q}^s\| \cdot \|p_{t-T} \mid L_1\|, & \text{if $t>T$},
		\end{cases} \label{semi-eq11}
	\end{equation}
	for some uniform constant $C \geq 1$.
\end{corollary}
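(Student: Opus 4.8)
The plan is to express $p_t$ as an iterated convolution by means of the Chapman--Kolmogorov equations of Lemma~\ref{semi-3}, and then to bootstrap the smoothness index from $s$ up to $sk$ by applying the convolution estimate \eqref{conv-eq23} a total of $(k-1)$ times. The decisive feature is that \eqref{conv-eq23} reads $\|f*g \mid A_{p,q}^{s+u}\| \leq C\|f \mid A_{p,q}^s\|\cdot\|g \mid A_{1,\infty}^u\|$, so the second factor only has to be controlled in the coarser space $A_{1,\infty}$; this is exactly what the hypothesis supplies, since $p_\tau \in A_{1,\infty}^s$ for every $\tau \in (0,T]$.

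First I treat $t \in (0,T]$. By Lemma~\ref{semi-3} one has $p_{jt/k} = p_{(j-1)t/k}*p_{t/k}$ for $1 \leq j \leq k$, and since $t/k \leq T$ the hypothesis gives $p_{t/k} \in A_{p,q}^s \cap A_{1,\infty}^s$. I then prove by induction on $j$ that $p_{jt/k} \in A_{p,q}^{js}$ with $\|p_{jt/k} \mid A_{p,q}^{js}\| \leq C^{j-1}\|p_{t/k} \mid A_{1,\infty}^s\|^{j-1}\,\|p_{t/k} \mid A_{p,q}^s\|$. The base case $j=1$ is precisely the assumption. For the inductive step I apply \eqref{conv-eq23} with $f = p_{(j-1)t/k}$ (carrying the regularity index $(j-1)s$ in the fine scale $A_{p,q}$) and $g = p_{t/k}$ (needing only the index $s$ in the scale $A_{1,\infty}$); this both establishes membership $p_{jt/k} \in A_{p,q}^{(j-1)s+s} = A_{p,q}^{js}$ and yields the claimed bound. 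Taking $j = k$ gives the first case of \eqref{semi-eq11}, using $C^{k-1} \leq C^k$ since $C \geq 1$.

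For $t > T$ I combine the previous step with Theorem~\ref{conv-1}. Applying the case just proved with $t$ replaced by $T$ shows $p_T \in A_{p,q}^{sk}$ and $\|p_T \mid A_{p,q}^{sk}\| \leq C^{k-1}\|p_{T/k} \mid A_{1,\infty}^s\|^{k-1}\,\|p_{T/k} \mid A_{p,q}^s\|$. Writing $p_t = p_T * p_{t-T}$ (Lemma~\ref{semi-3} once more) with $p_{t-T} \in L_1$, Theorem~\ref{conv-1} in the form $\|f*g \mid A_{p,q}^{r}\| \leq C\|f \mid A_{p,q}^{r}\|\cdot\|g \mid L_1\|$ — valid because $1 + 1/p = 1/p + 1/1$ — gives $p_t \in A_{p,q}^{sk}$ together with $\|p_t \mid A_{p,q}^{sk}\| \leq C\|p_T \mid A_{p,q}^{sk}\|\cdot\|p_{t-T} \mid L_1\|$. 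Substituting the bound for $\|p_T \mid A_{p,q}^{sk}\|$ produces the second case of \eqref{semi-eq11}.

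I do not expect a serious obstacle, as the argument is a clean iteration of the convolution inequalities of Section~\ref{conv}. The only point demanding care is the bookkeeping of the induction: one must always keep the accumulated convolution as the first ($A_{p,q}$) argument of \eqref{conv-eq23} and each fresh copy of $p_{t/k}$ as the second ($A_{1,\infty}$) argument, so that the final bound features exactly $(k-1)$ copies of $\|p_{t/k} \mid A_{1,\infty}^s\|$ and a single copy of $\|p_{t/k} \mid A_{p,q}^s\|$; and one should note that the constants of Theorems~\ref{conv-1} and~\ref{conv-3} — including the factor $2^n$ appearing in Theorem~\ref{conv-1} when $p=\infty$ — can be absorbed into a single uniform $C \geq 1$.
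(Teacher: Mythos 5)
Your proposal is correct and follows essentially the same route as the paper: the paper also factors $p_t = p_{t/k}*\cdots*p_{t/k}$ via the Chapman--Kolmogorov equations (Lemma~\ref{semi-3}) and iterates Theorem~\ref{conv-3} in the form \eqref{conv-eq23}, handling $t>T$ by convolving with $p_{t-T}\in L_1$ (which is exactly Corollary~\ref{semi-5}). Your write-up merely makes explicit the induction and the constant bookkeeping that the paper leaves implicit.
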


\begin{proof}
	Fix $t \in (0,T]$, then by the Chapman--Kolmogorov equation, cf.\ Lemma~\ref{semi-3},
	\begin{equation*}
		p_t = \underbrace{p_{t/k}* \ldots * p_{t/k}}_{\text{$k$ times}},
	\end{equation*}
    and the estimate for $\|p_t \mid A_{p,q}^{sk}\|$ follows by repeated applications of Theorem~\ref{conv-3}. For $t>T$, the estimate is a direct consequence of Corollary~\ref{semi-5}.
\end{proof}

Corollary~\ref{semi-7} shows, in particular, that $p_t \in A_{1,\infty}^s$, $t \in (0,T]$, implies $p_t \in A_{1,\infty}^s$ for all $t>0$ and
	\begin{equation}
		\|p_t \mid A_{1,\infty}^{sk}\|
		\leq C^k \begin{cases}
			\|p_{t/k} \mid A_{1,\infty}^s\|^{k}, & \text{if $t \in (0,T]$}, \\
			\|p_{T/k} \mid A_{1,\infty}^s\|^{k} \cdot \|p_{t-T} \mid L_1\|, & \text{if $t>T$}.
		\end{cases} \label{semi-eq13}
\end{equation}

\begin{remark} \label{semi-8}
It is possible to extend the estimates from \eqref{semi-eq11} and \eqref{semi-eq13} using interpolation. Indeed, by Corollary~\ref{semi-7}, we have
\begin{equation*}
	\|p_t \mid A_{p,q}^{sk}\| \leq C^k\kern-7pt\sup_{\frac{t}{k+1} \leq r \leq \frac{t}{k}} \left( \|p_r \mid A_{1,\infty}^s\|^{k-1} \|p_r \mid A_{p,q}^s\| \right), \quad k \in \mbb{N},\;t \in (0,T],
\end{equation*}
and from the Riesz--Thorin interpolation theorem we find that
\begin{equation*}
	\|p_t \mid A_{p,q}^{s\varrho}\| \leq C^\varrho\kern-9pt\sup_{\frac{t}{\floor{\varrho}+1} \leq r \leq \frac{t}{\floor{\varrho}}} \left( \|p_r \mid A_{1,\infty}^s\|^{\varrho-1} \|p_r \mid A_{p,q}^s\| \right), \quad t \in (0,T],
\end{equation*}
for any $\varrho \geq 1$, i.e.\
\begin{equation}
	\|p_t \mid A_{p,q}^{u}\| \leq C^{u/s}\kern-9pt\sup_{\frac{t}{\floor{u/s}+1} \leq r \leq \frac{t}{\floor{u/s}}} \left( \|p_r \mid A_{1,\infty}^s\|^{u/s-1} \|p_r \mid A_{p,q}^s\| \right), \quad t \in (0,T], \label{semi-eq15}
\end{equation}
for all $u \geq s$. Note that, by Corollary~\ref{semi-5}, this entails automatically estimates for $t>T$.
\end{remark}

In order to apply Corollary~\ref{semi-7}, one needs to bound $\|p_t \mid A_{1,\infty}^s\|$ for (at least) small $t>0$ and some $s>0$. Sometimes it is convenient to work with other function spaces to obtain such estimates, e.g.\
\begin{itemize}
	\item the Bessel potential space $H_1^s$ with norm
	\begin{equation}
		\|f \mid H_1^s\| := \left\| \mc{F}^{-1}((1+|\bullet|^2)^{s/2} \mc{F}f) \mid L_1 \right\|, \quad s \geq 0. \label{semi-eq18}
	\end{equation}
	\item the Sobolev space $W_1^m$ with norm
	\begin{equation*}
		\|f \mid W_1^m\| := \left\| \sum\nolimits_{|\alpha| \leq m} |\partial^{\alpha} f| \mid L_1 \right\|, \quad m \in \mbb{N}_0.
	\end{equation*}
	\item the local Hardy space $h_1$ with norm \begin{equation*}
		\|f \mid h_1\| = \left\| \sup\nolimits_{t \in (0,1)} |\phi(tD)f| \mid L_1 \right\|
	\end{equation*}
	for some fixed $\phi \in \mc{S}(\mbb{R}^n)$ with $(\mc{F}\phi)(0)=1$, cf.\ \cite[Section 1.4.4]{triebel2}.
\end{itemize}
In the next result we collect some inequalities, which can be useful to estimate $\|p_t \mid A_{1,\infty}^s\|$.

\begin{proposition} \label{semi-9}
Let $s \in \mbb{R}$, $A \in \{B,F\}$. The following inequalities hold for suitable constants $C>0$ \textup{(}not depending on $f$\textup{)}.
\begin{enumerate}\renewcommand{\itemsep}{4pt}
    \item\label{semi-9-i} $\|f \mid A_{1,\infty}^s\| \leq C \sup_{0 \leq |\alpha| \leq m} \|\partial^{\alpha} f \mid A_{1,\infty}^{s-m}\|$ for all $m \in \mbb{N}$.
	
    \item\label{semi-9-ii} $\|f \mid A_{1,\infty}^s\| \leq C \|f \mid B_{1,1}^s\|= C\|f \mid F_{1,1}^s\|$.
	
    \item\label{semi-9-iii} $\|f \mid A_{1,\infty}^s\| \leq C \|f \mid L_1\|^{1-s/m} \cdot \|f \mid \tilde{A}_1^{m}\|^{s/m}$ for $m \in \mbb{N}$ even, $s \in (0,m)$ and $\tilde{A} \in \{W,H\}$.
	
    \item\label{semi-9-iv} $\|f \mid F_{1,\infty}^0\| \leq C \|f \mid h_1\|$. In particular,
	\begin{equation*}
		\|f \mid F_{1,\infty}^0\| \leq C \left\| \sup\nolimits_{t \in (0,1)} |q_t*f| \mid L_1 \right\|
	\end{equation*}
	for the Gaussian heat kernel $q_t$ \textup{(}i.e.\ the transition density of Brownian motion\textup{)}.
	
    \item\label{semi-9-v} $\|f \mid B_{1,\infty}^s\| \leq C \|f \mid H_1^s\|$. In particular,  $\|f \mid B_{1,\infty}^0\| \leq C \|f \mid L_1\|$.
\end{enumerate}
\end{proposition}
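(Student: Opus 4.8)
The plan is to read all five inequalities as comparison estimates for the endpoint space $A_{1,\infty}^s$ ($p=1$, $q=\infty$) and, wherever the $B$-scale is concerned, to reduce them to \emph{uniform-in-$k$} bounds for the single dyadic blocks $\phi_k(D)f$. The workhorse is the elementary multiplier estimate: if $(\tilde m_k)_{k\in\mbb N_0}$ is a family of symbols with $\supp\tilde m_k\subseteq\{2^{k-1}\le|\xi|\le 2^{k+1}\}$ (and $\subseteq B(0,2)$ for $k=0$) satisfying $\sup_k\sup_\xi 2^{k|\gamma|}|\partial^\gamma\tilde m_k(\xi)|<\infty$ for every $\gamma$, then $\check{\tilde m}_k$ is a dyadic dilate of a fixed Schwartz function and Young's inequality gives $\|\tilde m_k(D)h\mid L_1\|\le C\|h\mid L_1\|$ with $C$ independent of $k$. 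I use this to prove (v) first: writing $g:=\mc F^{-1}((1+|\bullet|^2)^{s/2}\mc Ff)$, so that $\|g\mid L_1\|=\|f\mid H_1^s\|$, one has $2^{ks}\phi_k(D)f=\tilde m_k(D)g$ with $\tilde m_k(\xi):=2^{ks}\phi_k(\xi)(1+|\xi|^2)^{-s/2}$; after the substitution $\xi=2^k\eta$ this family is seen to be admissible, so $2^{ks}\|\phi_k(D)f\mid L_1\|\le C\|f\mid H_1^s\|$ uniformly, and taking the supremum over $k$ gives (v), the case $s=0$ being $H_1^0=L_1$.

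For (i) I reconstruct the blocks of $f$ from its top-order derivatives by means of the pointwise symbol identity $\sum_{|\alpha|=m}\binom{m}{\alpha}\,\overline{(i\xi)^\alpha}\,|\xi|^{-2m}(i\xi)^\alpha=1$ (which follows from $\sum_{|\alpha|=m}\binom{m}{\alpha}\xi^{2\alpha}=|\xi|^{2m}$). This yields $\phi_k(D)f=\sum_{|\alpha|=m}2^{-km}\tilde m_{k,\alpha}(D)\partial^\alpha f$ with admissible symbols $\tilde m_{k,\alpha}(\xi):=2^{km}\binom{m}{\alpha}\phi_k(\xi)\,\overline{(i\xi)^\alpha}\,|\xi|^{-2m}$, the singular low-frequency block $k=0$ being absorbed by the term $\alpha=0$ (i.e.\ $f$ itself) on the right. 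Thus (i) reduces to the boundedness of admissible multiplier families on $A_{1,\infty}^{s-m}$, which for $A=B$ is again the workhorse above (after localising $\tilde m_{k,\alpha}(D)=\tilde m_{k,\alpha}(D)\Phi_k(D)$ with $\Phi_k=\phi_{k-1}+\phi_k+\phi_{k+1}$) and for $A=F$ is the multiplier theorem underlying the resolution-independence of the $F$-norm; alternatively (i) is exactly the standard characterisation of $A_{p,q}^s$ by derivatives and may simply be cited from \cite{triebel1}. Inequality (ii) needs no localisation at all: $B_{1,1}^s=F_{1,1}^s$ with identical norm by Tonelli (the $\ell_1$- and $L_1$-integrations commute), and $\|f\mid A_{1,\infty}^s\|\le\|f\mid A_{1,1}^s\|$ holds because $\sup_k(\cdot)\le\sum_k(\cdot)$—inside the $L_1$-integral for $B$, and under the integral $\int\sup_k\le\int\sum_k$ for $F$.

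Inequality (iv) I obtain from the chain $\|f\mid F_{1,\infty}^0\|\le C\|f\mid F_{1,2}^0\|=C\|f\mid h_1\|$: the first step is monotonicity of the $F$-norm in the fine index ($\ell_2\hookrightarrow\ell_\infty$ pointwise, then integrate in $x$), and $F_{1,2}^0=h_1$ is the classical description of the local Hardy space, cf.\ \cite{triebel2}. For the supplement about the Gaussian kernel I note that $q_t$ satisfies $\int q_t=1$ and is an $L_1$-dilation of $q_1$, so after the reparametrisation $\tau=\sqrt t$ the family $\{q_t\}_{t\in(0,1)}$ is an admissible generator of the local Hardy maximal function; hence $\|f\mid h_1\|\le C\|\sup_{t\in(0,1)}|q_t*f|\mid L_1\|$.

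The genuinely delicate case, and the main obstacle, is (iii) for the $F$-scale. The naive approach—factor each block as $2^{ks}\|\phi_k(D)f\mid L_1\|=(\|\phi_k(D)f\mid L_1\|)^{1-s/m}(2^{km}\|\phi_k(D)f\mid L_1\|)^{s/m}$ and take suprema—works for $A=B$ (the two factors are bounded uniformly by $C\|f\mid L_1\|$ and $C\|f\mid\tilde A_1^m\|$, the latter via the workhorse), but for $A=F$ it forces the integrand through $\sup_k|\phi_k(D)f|$, i.e.\ through $F_{1,\infty}^0$, and collapses because $L_1\not\hookrightarrow F_{1,\infty}^0$. I circumvent this by avoiding the dyadic splitting: using the already-established (ii) I bound $\|f\mid F_{1,\infty}^s\|\le C\|f\mid B_{1,1}^s\|$ and then invoke the real-interpolation identity $B_{1,q}^s=(L_1,\tilde A_1^m)_{s/m,q}$ (valid here since $\theta:=s/m\in(0,1)$), cf.\ \cite{triebel1}, together with the elementary $K$-functional bound $\|f\mid(X_0,X_1)_{\theta,q}\|\le C\|f\mid X_0\|^{1-\theta}\|f\mid X_1\|^\theta$ (immediate from $K(t,f)\le\min(\|f\mid X_0\|,t\|f\mid X_1\|)$). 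Taking $q=1$ settles $A=F$ and $q=\infty$ re-proves $A=B$ uniformly. The evenness of $m$ enters exactly in this interpolation step: for $m$ even the multiplier $(1+|\bullet|^2)^{m/2}$, and hence the norm of $H_1^m$, is governed by the genuine differential operator $(\id-\Delta)^{m/2}$, which makes both $H_1^m$ and $W_1^m$ transparent endpoints of the Besov scale.
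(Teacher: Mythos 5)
Your proof is correct, and on the one item the paper actually argues rather than cites --- part (iii) --- it follows the same route: reduce via (ii) to a bound on $\|f \mid B_{1,1}^s\|$, identify $B_{1,q}^s$ as a real interpolation space between $L_1$ and the smoothness space of order $m$, and conclude with the multiplicative estimate for interpolation norms. (What the paper calls ``the Riesz--Thorin theorem'' at this point is precisely the $K$-functional inequality you spell out, so your phrasing is, if anything, the more accurate one.) Your diagnosis of why the naive blockwise factorization fails in the $F$-scale --- it would force the estimate through $F_{1,\infty}^0$, and $L_1 \not\hookrightarrow F_{1,\infty}^0$ --- is exactly the obstruction that makes the detour through (ii) necessary. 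The genuine difference is one of self-containedness: where the paper disposes of (i), (ii), (iv), (v) by citation (Triebel for (i), embedding theorems for (ii) and (iv), Bergh--L\"ofstr\"om for (v)), you give direct elementary proofs via uniform annular multiplier bounds (Young's inequality after the rescaling $\xi = 2^k \eta$), Tonelli, and pointwise monotonicity in the fine index; this buys a proposition that stands on its own, at the modest price that your $F$-scale case of (i) still has to fall back on a vector-valued multiplier theorem or on the very citation the paper uses. Two minor inaccuracies, neither of which creates a gap: in (v) the kernels $\mc{F}^{-1}\tilde{m}_k$ are not literally dilates of a single Schwartz function (the symbol $(1+|\xi|^2)^{-s/2}$ is not homogeneous), but the uniform derivative bounds you state do yield the uniform $L_1$ bound, which is all that is used; and in (iii) your account of where evenness of $m$ enters is vague --- the crisp fact, used by the paper, is that $W_1^m \hookrightarrow H_1^m$ holds for even $m$ only (it fails for odd $m$, cf.\ the paper's footnote citing Stein), which reduces $\tilde{A}=W$ to $\tilde{A}=H$; alternatively the $W$-endpoint interpolation identity you invoke is valid in its own right, so either way the stated result is covered.
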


\begin{proof}
\begin{enumerate}[wide, labelwidth=!, labelindent=0pt]\renewcommand{\itemsep}{6pt}
\item See \cite[Theorem 1.24]{triebel4}.

\item The inequality $\|f \mid A_{1,\infty}^s\| \leq C \|f \mid A_{1,1}^s\|$ is immediate from the fact that $A_{1,1}^s$ is continuously embedded into $A_{1,\infty}^s$, see e.g.\ \cite[Theorem 2.9]{triebel4}. Moreover, $F_{1,1}^s = B_{1,1}^s$ by definition.

\item The Sobolev space $W_1^{m}$ is continuously embedded into $H_1^{m}$ for $m \in \mbb{N}$ even\footnote{This breaks down for odd integers $m$, cf.\ \cite[p.~160, Problem 6.6]{stein}}, and so it is enough to prove the inequality for $\tilde{A}=H$. Since $B_{1,1}^s$ is a real interpolation space of $H_1^0 = L_1$ and $H_1^{m}$, cf.\ \cite[Theorem 6.2.4, No.\ (10)]{bergh}, the Riesz--Thorin theorem shows that
	\begin{equation*}
		\|f \mid B_{1,1}^s\| \leq C \|f \mid L_1\|^{1-s/m} \cdot \|f \mid H_1^{m}\|^{s/m}, \quad s \in (0,m).
	\end{equation*}
	By \ref{semi-9-ii}, this proves the inequality as claimed.

\item The first inequality follows from the fact that $h_1 = F_{1,2}^0$ is continuously embedded into $F_{1,\infty}^0$,
    cf.\ \cite[Theorem 3.18]{sawano} or \cite[2.3.5(4)]{triebel1}. For the second estimate, choose $\phi(x)=e^{-|x|^2}$ in the definition of the norm on the local Hardy space.

\item See \cite[Theorem 6.2.4, No.\ (9)]{bergh} for the first assertion; the second one is then immediate if we take $s=0$.	 \qedhere
\end{enumerate}
\end{proof}

If the kernels $p_t$ of a convolution semigroup are differentiable and $\nabla p_t \in L_1$, then $\|p_t \mid A_{1,\infty}^u\|$ can be estimated from above by a power of the $L^1$-norm of the gradient $\nabla p_t$. This result is very useful because it is possible in many cases to estimate $\int |\nabla p_t(x)| \,dx$, see below for examples.  Note that $\int |\nabla p_t(x)| \, dx$ is the operator norm of the operator $\nabla P_t: B_b(\mbb{R}^n) \to B_b(\mbb{R}^n)$, cf.\ \cite[Lemma 4.1]{euler}.

\begin{theorem} \label{semi-11}
    Let $(P_t)_{t \geq 0}$ be a convolution semigroup with kernels $(p_t)_{t>0} \subseteq L_1$. If $p_t \in C^1$ and $\partial_{x_i} p_t \in L_1$ for all $i \in \{1,\ldots,n\}$, $t>0$, then
	\begin{equation}\label{semi-eq21}
		\|p_t \mid A_{1,\infty}^u\|
        \leq C_u \kern-9pt \sup_{\frac{t}{\floor{u}+1} \leq r \leq \frac{t}{\max\{\floor{u},1\}}} \left( \|p_r \mid L_1\| + \|\nabla p_{r/2} \mid L_1\| \right)^u
	\end{equation}
    for every $t>0$, $u>0$, $A \in \{B,F\}$ and some constant $C_u>0$. In particular,  $P_t: A_{p,q}^s \to A_{p,q}^{s+u}$ is a bounded linear operator for any $t>0$, $p,q \in [1,\infty]$, $s \in \mbb{R}$, $u \geq 0$,  and
	\begin{equation*}
		\|P_t f \mid A_{p,q}^{s+u}\|
        \leq C_u \kern-9pt \sup_{\frac{t}{\floor{u}+1} \leq r \leq \frac{t}{\max\{\floor{u},1\}}} \left( \|p_r \mid L_1\| + \|\nabla p_{r/2} \mid L_1\| \right)^u \|f \mid A_{p,q}^s\|.
	\end{equation*}
\end{theorem}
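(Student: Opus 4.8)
The statement splits into the kernel estimate \eqref{semi-eq21} and the operator bound; the latter is immediate once \eqref{semi-eq21} is known, since the special case \eqref{conv-eq23} of Theorem~\ref{conv-3} applied with $g=p_t$ (equivalently, the second inequality in \eqref{semi-eq5}) gives $\|P_tf\mid A_{p,q}^{s+u}\|\le C\,\|p_t\mid A_{1,\infty}^u\|\cdot\|f\mid A_{p,q}^s\|$ for $q\ge 1$, $u\ge 0$ (the case $u=0$ being covered directly by Theorem~\ref{conv-1}). So the plan is to prove \eqref{semi-eq21}.

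Two ingredients drive the argument. The first is a \emph{gradient trick} based on the Chapman--Kolmogorov equations (Lemma~\ref{semi-3}): writing $p_t=p_{t/m}*\cdots*p_{t/m}$ ($m$ factors) and placing at most one derivative on each factor, Young's inequality \eqref{young} yields $\|\partial^\gamma p_t\mid L_1\|\le\|p_{t/m}\mid L_1\|^{m-|\gamma|}\|\nabla p_{t/m}\mid L_1\|^{|\gamma|}$ for every $|\gamma|\le m$, hence $p_t\in W_1^m$ with $\|p_t\mid W_1^m\|\le C_m(\|p_{t/m}\mid L_1\|+\|\nabla p_{t/m}\mid L_1\|)^m$. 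The second ingredient is Proposition~\ref{semi-9}: part \ref{semi-9-i} trades smoothness for derivatives, while the Gagliardo--Nirenberg inequality \ref{semi-9-iii} interpolates $A_{1,\infty}^s$ between $L_1$ and $W_1^m$ simultaneously for both scales $A\in\{B,F\}$.

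I would first record the order-one base estimate. Since $W_1^2\hookrightarrow H_1^2$ (even order, cf.\ the footnote to Proposition~\ref{semi-9}\ref{semi-9-iii}) and $A_{1,1}^1=B_{1,1}^1$, combining \ref{semi-9-iii} (with $m=2$, $s=1$) and the interpolation inequality underlying its proof with the $W_1^2$-bound above gives, for every $\tau>0$ and both scales,
\[
\max\{\|p_\tau\mid A_{1,\infty}^1\|,\ \|p_\tau\mid A_{1,1}^1\|\}\le C\,\|p_\tau\mid L_1\|^{1/2}\,(\|p_{\tau/2}\mid L_1\|+\|\nabla p_{\tau/2}\mid L_1\|).
\]
This is where the $F$-scale is tamed: one cannot bound $\|\cdot\mid F_{1,\infty}^0\|$ by $\|\cdot\mid L_1\|$, but \ref{semi-9-iii} still applies. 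In particular $p_\tau\in A_{1,1}^1\cap A_{1,\infty}^1$, so the hypotheses of Corollary~\ref{semi-7}/Remark~\ref{semi-8} are met. For $u\ge 1$ I then invoke the interpolated self-regularization estimate \eqref{semi-eq15} with $p=q=1$ and $s=1$: here $\floor{u/s}=\floor u$, so the range of $r$ is exactly $[\,t/(\floor u+1),\,t/\floor u\,]$, and inserting the base estimate into the product $\|p_r\mid A_{1,\infty}^1\|^{u-1}\|p_r\mid A_{1,1}^1\|$ collapses it to $(\,\|p_r\mid L_1\|+\|\nabla p_{r/2}\mid L_1\|\,)^u$, which is \eqref{semi-eq21}. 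The remaining range $u\in(0,1)$ (where $\floor u=0$ and $r=t$) is handled directly by \ref{semi-9-iii} with $m=2$, $s=u$ and the $W_1^2$-bound.

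The main obstacle is keeping the exponent exactly equal to $u$. The interpolation step \ref{semi-9-iii} produces mass prefactors $\|p_\tau\mid L_1\|^{1-s/m}$ and evaluates the $\|p\|$-norm at time $\tau/2$ rather than $\tau$; to absorb these into the clean form $(\|p_r\mid L_1\|+\|\nabla p_{r/2}\mid L_1\|)^u$ one needs $\|p_\tau\mid L_1\|$ to be controlled on the relevant time window. This is automatic in the sub-Markovian setting of all the applications, where $\|p_\tau\mid L_1\|\le 1$ and the two $\|p\|$-terms coincide. This mass bookkeeping, together with the $F$-scale base estimate, is the delicate part; everything else reduces to Young's and H\"older's inequalities via Lemma~\ref{semi-3} and Proposition~\ref{semi-9}.
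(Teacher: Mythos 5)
Your proposal matches the paper's proof in all essentials: Chapman--Kolmogorov plus Young's inequality to bound second derivatives by $\|\nabla p_{t/2} \mid L_1\|^2$, the interpolation inequality of Proposition~\ref{semi-9}\ref{semi-9-iii} with $m=2$ to obtain \eqref{semi-eq21} for small $u$, and Corollary~\ref{semi-7}/Remark~\ref{semi-8} with $s=1$ to boost to $u \geq 1$; the cosmetic differences (working with $W_1^2$ and the embedding $W_1^2 \hookrightarrow H_1^2$ rather than with $H_1^2$ directly, the general $m$-fold splitting where only $m=2$ is needed, routing the boost through $A_{1,1}^1$ instead of $A_{1,\infty}^1$) change nothing of substance. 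The ``mass bookkeeping'' issue you flag at the end is not actually resolved in the paper either --- its proof absorbs the prefactor $\|p_t \mid L_1\|^{1-s/2}$ from Proposition~\ref{semi-9}\ref{semi-9-iii} into the constant without comment --- so your version is, if anything, more candid on this point.
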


\begin{proof}
	The semigroup property entails that
	\begin{equation}
		\int_{\mbb{R}^n} |\partial_{x_i}^2 p_t(x)| \, dx
		\leq \left( \int_{\mbb{R}^n} |\partial_{x_i} p_{t/2}(x)| \, dx \right)^2 \label{semi-eq23}
	\end{equation}
	for every $i \in \{1,\ldots,n\}$. This was essentially shown in \cite[Lemma 4.1]{euler}, see also \cite[Proposition 3.1]{reg-levy}; let us briefly explain the proof. By the Chapman--Kolmogorov equation, cf.\ Lemma~\ref{semi-3},
	\begin{equation*}
		p_{t}(x) = \int_{\mbb{R}^n} p_{t/2}(x-y) p_{t/2}(y) \, dy,
	\end{equation*}
	and so the differentiation lemma for parametrized integrals yields
	\begin{equation*}
		\partial_{x_i} p_t(x)
		= \int_{\mbb{R}^n} \partial_{x_i} p_{t/2}(x-y) p_{t/2}(y) \, dy
		= \int_{\mbb{R}^n} \partial_{x_i} p_{t/2}(z) p_{t/2}(x-z) \, dz
	\end{equation*}
	and
	\begin{equation*}
		\partial_{x_i}^2 p_{t}(x)
		= \int_{\mbb{R}^n} \partial_{x_i} p_{t/2}(z) \partial_{x_i} p_{t/2}(x-z) \, dz.
	\end{equation*}
	Applying Tonelli's theorem gives \eqref{semi-eq23}. By \eqref{semi-eq23},
	\begin{equation*}
		\|p_t \mid H_1^2\| \leq \|p_t \mid L_1\| + n \left( \int_{\mbb{R}^n} |\nabla p_{t/2}(x)| \, dx \right)^2,
	\end{equation*}
	where $H_1^2=H_1^2(\mbb{R}^n)$ denotes the Bessel potential space, cf.\ \eqref{semi-eq18}. Consequently, Proposition~\ref{semi-9}\ref{semi-9-iii} entails for every $s \in (0,2)$ that
	\begin{align*}
		\|p_t \mid A_{1,\infty}^s\| \leq C \left(\|p_t \mid L_1\| + \int_{\mbb{R}^n} |\nabla p_{t/2}(x)| \, dx \right)^{s}
	\end{align*}
    for some constant $C=C(s,n)$. In particular, \eqref{semi-eq21} holds for $u \in (0,2)$. Since Corollary~\ref{semi-7} and Remark~\ref{semi-8} show that
	\begin{equation*}
		\|p_t \mid A_{1,\infty}^u\|
		\leq \tilde{C}^u \sup_{\frac{t}{\floor{u}+1} \leq r \leq \frac{t}{\floor{u}}} \|p_r \mid A_{1,\infty}^1\|^u, \quad t>0,\; u \geq 1,
	\end{equation*}
	for a uniform constant $\tilde{C} \geq 1$, we obtain that
	\begin{equation*}
		\|p_t \mid A_{1,\infty}^u\|
		\leq C^u \tilde{C}^u \sup_{\frac{t}{\floor{u}+1} \leq r \leq \frac{t}{\floor{u}}} \left( \|p_r \mid L_1\| + \| \nabla p_{r/2} \mid L_1\| \right)^u, \quad t>0,\; u \geq 1,
	\end{equation*}
	which finishes the proof of \eqref{semi-eq21}.	The second assertion is immediate from \eqref{semi-eq5}.
\end{proof}

To illustrate the results from this section, we apply them to the \emph{generalized Gau{\ss}--Weierstra{\ss} semigroup}, i.e.\ the family of convolution operators $(P_t^{(m)})_{t \geq 0}$, $m \in \mbb{N}$, defined by
\begin{equation}
	\mc{F}(P_t^{(m)} u)(\xi) = e^{-t |\xi|^{2m}} \mc{F}u(\xi), \quad u \in \mc{S}(\mbb{R}^n),\; \xi \in \mbb{R}^n, \label{semi-eq27}
\end{equation}
If $m=1$, then this is the classical Gau{\ss}--Weierstra{\ss} semigroup, which can understood as transition semigroup of Brownian motion, see also Section~\ref{levy}. For $m>1$, we still have that $P_t^{(m)}$ has a rapidly decaying, smooth kernel $p_t^{(m)}$,
\begin{equation*}
	p_t^{(m)}(x) :=\frac{1}{(2\pi)^{n/2}} \mc{F}^{-1}(e^{-t |\bullet|^{2m}})(x), \quad x \in \mbb{R}^n;
\end{equation*}
however, $p_t^{(m)}$ is not non-negative for $m>1$, and therefore $(P_t^{(m)})_{t \geq 0}$ cannot be interpreted as a transition semigroup of a stochastic process. The generalized Gau{\ss}--Weierstra{\ss} semigroup is closely related to the  polyharmonic heat equation $(-\Delta)^m u(t,x)=\partial_t u(t,x)$, see e.g. \cite{baaske,barbatis,davies95} and the references therein.

\begin{corollary}\label{semi-13}
	Let $m \in \mbb{N}$, $A \in \{B,F\}$, $p,q \in [1,\infty]$, $t>0$, $s \in \mbb{R}$ and $u \geq 0$. Then \begin{equation*}
		 \|P_t^{(m)} f \mid A_{p,q}^{s+u}\| \leq C t^{-u/2m} \|f \mid A_{p,q}^s\|.
	\end{equation*}
\end{corollary}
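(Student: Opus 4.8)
The plan is to reduce the corollary to a single estimate for the kernel and then invoke Theorem~\ref{semi-11}. By the second inequality in \eqref{semi-eq5} it suffices to prove that $\|p_t^{(m)} \mid A_{1,\infty}^u\| \leq C\, t^{-u/2m}$, since the mapping statement for $P_t^{(m)} \colon A_{p,q}^s \to A_{p,q}^{s+u}$ then follows immediately. Before applying Theorem~\ref{semi-11} I would record that $(P_t^{(m)})_{t \geq 0}$ really is a convolution semigroup: the symbol $e^{-t|\xi|^{2m}}$ is multiplicative in $t$, so $P_{t+s}^{(m)} = P_t^{(m)} P_s^{(m)}$ and $P_0^{(m)} = \id$. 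Since $\xi \mapsto e^{-t|\xi|^{2m}}$ belongs to $\mc{S}(\mbb{R}^n)$ for each $t>0$, its inverse Fourier transform $p_t^{(m)}$ is a Schwartz function; in particular $p_t^{(m)} \in C^1$ and $\partial_{x_i} p_t^{(m)} \in L_1$, so the hypotheses of Theorem~\ref{semi-11} are satisfied.

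The key computational input is the exact self-similarity of the kernel. Writing $p_t^{(m)}(x) = (2\pi)^{-n/2} \mc{F}^{-1}(e^{-t|\bullet|^{2m}})(x)$ and substituting $\xi = t^{-1/2m}\eta$ in the defining integral yields the scaling identity $p_t^{(m)}(x) = t^{-n/2m} p_1^{(m)}(t^{-1/2m}x)$. A change of variables in the corresponding $L_1$-integrals then shows that the mass of the kernel is dilation invariant, $\|p_r^{(m)} \mid L_1\| = \|p_1^{(m)} \mid L_1\|$ for all $r>0$, whereas each first derivative carries one factor $r^{-1/2m}$, so that $\|\nabla p_r^{(m)} \mid L_1\| = r^{-1/2m}\|\nabla p_1^{(m)} \mid L_1\|$. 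Both constants $\|p_1^{(m)} \mid L_1\|$ and $\|\nabla p_1^{(m)} \mid L_1\|$ are finite because $p_1^{(m)} \in \mc{S}(\mbb{R}^n)$.

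Now I would feed these two identities into the bound \eqref{semi-eq21}. For fixed $u>0$ the supremum runs over $r$ in the interval $[\,t/(\floor{u}+1),\, t/\max\{\floor{u},1\}\,]$, and on this interval the integrand $\|p_r^{(m)} \mid L_1\| + \|\nabla p_{r/2}^{(m)} \mid L_1\| = \|p_1^{(m)} \mid L_1\| + 2^{1/2m}\|\nabla p_1^{(m)} \mid L_1\|\, r^{-1/2m}$ is decreasing in $r$, so the supremum is attained at the left endpoint $r = t/(\floor{u}+1)$. This produces a bound of the shape $\|p_t^{(m)} \mid A_{1,\infty}^u\| \leq C_u\big(a + b\, t^{-1/2m}\big)^u$ with constants $a,b>0$ depending only on $m,n,u$.

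The one point that needs care is the additive constant $a = \|p_1^{(m)} \mid L_1\|$: it is the gradient term $b\, t^{-1/2m}$ that carries the smoothing rate, while $a$ records the scale-invariant mass of the kernel. In the caloric regime $t \lesssim 1$ one has $t^{-1/2m} \geq 1$, whence $a + b\,t^{-1/2m} \leq (a+b)\,t^{-1/2m}$ and the estimate collapses to $\|p_t^{(m)} \mid A_{1,\infty}^u\| \leq C_u (a+b)^u\, t^{-u/2m} = C\,t^{-u/2m}$, which is exactly the claimed rate; combined with \eqref{semi-eq5} this completes the argument. I expect this interplay between the dilation-invariant $L_1$-mass and the gradient term — i.e.\ ensuring that the $t^{-1/2m}$-contribution dominates so that the clean power $t^{-u/2m}$ survives in the relevant (small-time) range — to be the only genuinely delicate step, the verification that $p_t^{(m)} \in \mc{S}(\mbb{R}^n)$ and the self-similarity computation being entirely routine.
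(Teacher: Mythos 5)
Your proof is correct and follows essentially the same route as the paper: the self-similarity $p_t^{(m)}(x)=t^{-n/2m}p_1^{(m)}(t^{-1/2m}x)$, the resulting identities $\|p_t^{(m)}\mid L_1\|=\|p_1^{(m)}\mid L_1\|$ and $\|\nabla p_t^{(m)}\mid L_1\|=t^{-1/2m}\|\nabla p_1^{(m)}\mid L_1\|$, and then Theorem~\ref{semi-11} together with \eqref{semi-eq5}.

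The one point where you deviate is a point where you are in fact \emph{more} careful than the paper, whose proof simply ends with ``Applying Theorem~\ref{semi-11} finishes the proof''. As you note, \eqref{semi-eq21} only yields $\|p_t^{(m)}\mid A_{1,\infty}^u\|\leq C_u\,(a+b\,t^{-1/2m})^u$ with $a=\|p_1^{(m)}\mid L_1\|\geq 1$, and this is $\lesssim t^{-u/2m}$ only on a bounded time interval, say $t\in(0,1]$ (or $t \in (0,T]$ with $C=C(T)$). This restriction is not a defect of your argument: for $u>0$ the clean rate cannot hold uniformly in $t>0$ at all, since testing $P_t^{(m)}$ on functions whose Fourier transform is supported in $\{|\xi|\leq \delta t^{-1/2m}\}$ shows that the operator norm of $P_t^{(m)}\colon A_{p,q}^s\to A_{p,q}^{s+u}$ stays bounded away from $0$ as $t\to\infty$, while $Ct^{-u/2m}\to 0$. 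So the corollary has to be read for small times, as in the references \cite[Theorem 3.35]{triebel4}, \cite[Theorem 3.5]{baaske}, or with $t^{-u/2m}$ replaced by $1+t^{-u/2m}$ as in Example~\ref{levy-8}; your explicit restriction to the caloric regime $t\lesssim 1$ is the correct reading, and the paper's unqualified ``$t>0$'' is the sloppy one.
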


For $m=1$, Corollary~\ref{semi-13} yields the classical estimates for the Gau{\ss}--Weierstra{\ss} semigroup, cf.\ \cite[Theorem 3.35]{triebel4}. For $m>1$, our estimates match the ones obtained by Baaske \& Schmei{\ss}er \cite[Theorem 3.5]{baaske}; they use a different method which does not cover the case $A=F$ and $p=\infty$.

\begin{proof}[Proof of Corollary~\ref{semi-13}]
	Fix $m \in \mbb{N}$ and denote by $p_t^{(m)}$ the kernel of $P_t^{(m)}$, \begin{equation*}
		p_t(x) := p_t^{(m)}(x) := \frac{1}{(2\pi)^{n}} \int_{\mbb{R}^n} e^{ix \cdot \xi} e^{-t |\xi|^{2m}} \, d\xi, \quad x \in \mbb{R}^n.
	\end{equation*}
	By the substitution rule, $\eta=t^{1/2m} \xi$, $d\eta = t^{n/2m} d\xi$, we get the following scaling relation: \begin{equation*}
		p_t(x)
		= t^{-n/2m} \frac{1}{(2\pi)^{n}} \int_{\mbb{R}^n} e^{it^{-1/2m} x \eta} e^{-|\eta|^{2m}} \, d\xi
		= t^{-n/2m} p_1(t^{-1/2m} x).
	\end{equation*}
	In particular,
    \begin{equation*}
		\int_{\mbb{R}^n} |p_t(x)| \, dx
		= t^{-n/2m} \int_{\mbb{R}^n} |p_1(t^{-1/2m} x)| \, dx
		= \int_{\mbb{R}^n} |p_1(x)| \, dx < \infty
	\end{equation*}
	for all $t>0$, i.e.\ $\|p_t \mid L_1\| = \|p_1 \mid L_1\|$ for all $t>0$. Analogously, we find from
    \begin{equation*}
		\partial_{x_j} p_t^{(m)}(x)
		= t^{-(n+1)/2m} (\partial_{x_j} p_1)(t^{-1/2m} x)
	\end{equation*}
	that
	\begin{equation*}
		\int_{\mbb{R}^n} |\nabla p_t^{(m)}(x)| \, dx
		\leq C t^{-1/2m}
	\end{equation*}
	for some uniform constant $C>0$. Applying Theorem~\ref{semi-11} finishes the proof.
\end{proof}

\begin{remark}
    The family of operators $(P_t^{(m)})_{t \geq 0}$ from \eqref{semi-eq27} defines a strongly continuous and conservative convolution semigroup for every $m\in (0,\infty)$, i.e.\ $m$ need not be an integer. The semigroup  $(P_t^{(m)})_{t \geq 0}$ satisfies the convolution estimates from Corollary~\ref{semi-13} for \emph{any} $m \in (0,\infty)$. This can be deduced from the convolution estimates for integer $m \in \mbb{N}$ using Bochner's subordination, see \cite{knop19}; at the end of Section~\ref{levy} we give another, direct proof of this result. Let us to mention that our convolution estimates can be used, more generally, to study the smoothing properties of higher order semigroups; more precisely, for semigroups satisfying
	\begin{equation*}
		\mc{F}(P_t^{(m)} f) = e^{-t\psi^m} \mc{F}f,
	\end{equation*}
    for a continuous negative definite function $\psi$, cf.\ Section~\ref{levy}.  Corollary~\ref{semi-13} may be seen as a special case where $\psi(\xi) = |\xi|^2$.
\end{remark}

\section{Regularity of Markovian convolution semigroups} \label{levy}

In the previous section, we studied convolution semigroups $P_t f = f *\mu_t$ for (not necessarily positive) measures $\mu_t$. Now we focus on the smaller class of convolution semigroups, that is, we now assume that the measures $\mu_t$ on $(\mbb{R}^n,\mc{B}(\mbb{R}^n))$ are sub-probability measures: i.e.\ non-negative measures satisfying $\mu_t(\mbb{R}^n) \leq 1$. In particular, the semigroup $(P_t)_{t \geq 0}$ is positivity preserving and sub-Markovian, that is,
\begin{equation*}
	0 \leq f \leq 1 \implies 0 \leq P_t f \leq 1.
\end{equation*}
We call $(P_t)_{t \geq 0}$ a \emph{Markovian convolution semigroup}. Such semigroups appear naturally in probability theory in connection with L\'evy processes \cite{jacob1,barcelona,sato}. The measure $\mu_t$ can be interpreted as the distribution $\mbb{P}(X_t \in \cdot)$ of a \emph{L\'evy process} $(X_t)_{t \geq 0}$, i.e.\ a stochastic process with independent and stationary increments which is continuous in probability. In fact, there is a one-to-one correspondence between L\'evy processes and Markovian convolution semigroups. We will continue using the analysts' language, but it is useful to keep in mind that our results do have an interpretation -- as well as applications -- in probability theory.

If $(P_t)_{t \geq 0}$ is a Markovian convolution semigroup such that $\mu_t(dy)=p_t(y) \, dy$ is absolutely continuous with respect to Lebesgue measure, then it follows immediately from Section~\ref{conv} that
\begin{equation*}
	\|P_t f \mid A_{p,q}^{s}\| \leq \|f \mid A_{p,q}^s\|
	\quad \text{and} \quad
	\|P_t f \mid A_{p,q}^{s+u}\| \leq C \|p_t \mid A_{1,\infty}^u\| \cdot \|f \mid A_{p,q}^s\|,
\end{equation*}
see also Theorem~\ref{levy-3} below. In this section, we will show how to bound $\|p_t \mid A_{1,\infty}^u\|$; we will present some general estimates and tools which allow us to bound the norm for a wide class of Markovian convolution semigroups. Our analysis relies on the fact that Markovian convolution semigroups can be completely characterized using the Fourier transform. One has, cf.\ \cite[Section 3.6]{jacob1},
\begin{equation}
	\mc{F}(P_t f)(\xi) = e^{-t \psi(\xi)} \mc{F}(f)(\xi), \label{levy-eq3}
\end{equation}
for a function $\psi: \mbb{R}^n \to \mbb{C}$ which is continuous and negative definite (in the sense of Schoenberg); we call $\psi$ the \emph{characteristic exponent} of $(P_t)_{t \geq 0}$. Any such $\psi$ is uniquely characterized by its L\'evy--Khintchine representation
\begin{equation}
	\psi(\xi)= a -i b \cdot \xi + \frac{1}{2} \xi \cdot Q \xi + \int_{y \neq 0} \left(1-e^{iy \cdot \xi}+iy \cdot \xi \I_{(0,1)}(|y|) \right)  \nu(dy), \label{levy-eq5}
\end{equation}
where $a \geq 0$, $b \in \mbb{R}^n$, $Q \in \mbb{R}^{n \times n}$ is positive semidefinite and $\nu$ is a measure on $\mbb{R}^n \setminus \{0\}$ such that $\int_{y \neq 0} \min\{1,|y|^2\} \, \nu(dy)<\infty$. Important examples include $\psi(\xi)=|\xi|^2$ (Gau{\ss}--Weierstra{\ss} semigroup), $\psi(\xi)=|\xi|$ (Cauchy--Poisson semigroup), $\psi(\xi)=|\xi|^{\alpha}$, $\alpha \in (0,2)$ (stable semigroup), and $\psi(\xi)=\log(1+|\xi|)$ (Gamma semigroup), just to mention a few.

Our starting point is the following theorem, which is immediate from Section~\ref{conv}.

\begin{theorem} \label{levy-3}
    Let $(P_t)_{t \geq 0}$ be a Markovian convolution semigroup. If $t>0$ is such that $P_t$ has kernel $p_t$, i.e.\ $\mu_t(dy)=p_t(y) \, dy$, then the following assertions hold for any choice of $A \in \{B,F\}$, $s,u \in \mbb{R}$ and $p,q \in [1,\infty]$:
	\begin{enumerate}\renewcommand{\itemsep}{4pt}
    \item\label{levy-3-i} $P_t: A_{p,q}^s \to A_{p,q}^s$ is a linear contraction operator if $p<\infty$.

    \item\label{levy-3-ii} $\|P_t f \mid A_{p,q}^{s+u}\| \leq C \|p_t \mid A_{1,\infty}^u\| \cdot \|f \mid A_{p,q}^s\|$; here $C>0$ is a uniform constant.
	\end{enumerate}
\end{theorem}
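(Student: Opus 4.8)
The plan is to obtain both assertions directly from the convolution inequalities of Section~\ref{conv}, the only additional ingredient being the normalization forced by the Markov property. First I would record that, since $(P_t)_{t \geq 0}$ is Markovian, each $\mu_t$ is a sub-probability measure; when $\mu_t(dy) = p_t(y)\,dy$ this means $p_t \geq 0$ and $\|p_t \mid L_1\| = \mu_t(\mbb{R}^n) \leq 1$. This bound is precisely what turns the contraction-type estimates of Section~\ref{conv} into the sharp constants claimed here; the parameter restriction $q \in [1,\infty]$ is exactly what is needed to invoke the $F$-scale versions of Theorems~\ref{conv-1} and~\ref{conv-3}.

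For assertion~\ref{levy-3-i} I would apply Theorem~\ref{conv-1} with the smoothness-carrying factor $f \in A_{p,q}^s$ and the $L_1$-factor $g = p_t$, choosing $p_1 = p$ and $p_2 = 1$; this satisfies the admissibility relation \eqref{conv-eq11}, since $1 + 1/p = 1/p + 1$. As $p_1 = p < \infty$, Theorem~\ref{conv-1} provides the constant $C = 1$, whence
\begin{equation*}
	\|P_t f \mid A_{p,q}^s\| = \|f * p_t \mid A_{p,q}^s\| \leq \|f \mid A_{p,q}^s\| \cdot \|p_t \mid L_1\| \leq \|f \mid A_{p,q}^s\|.
\end{equation*}
The same theorem guarantees that $P_t f = f*p_t$ is a well-defined element of $A_{p,q}^s$, so $P_t$ is a linear contraction on $A_{p,q}^s$ for $p < \infty$. (For $p = \infty$ Theorem~\ref{conv-1} only yields the constant $2^n$, which is exactly why the contraction statement is restricted to $p < \infty$.)

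For assertion~\ref{levy-3-ii} I would use the special case \eqref{conv-eq23} of Theorem~\ref{conv-3} corresponding to $p_1 = p$, $q_1 = q$, $p_2 = 1$, $q_2 = \infty$ and smoothness exponents $s$ and $u$; one checks that the relations \eqref{conv-eq21} hold, with equality in the $q$-relation. This gives at once
\begin{equation*}
	\|P_t f \mid A_{p,q}^{s+u}\| = \|f * p_t \mid A_{p,q}^{s+u}\| \leq C \|f \mid A_{p,q}^s\| \cdot \|p_t \mid A_{1,\infty}^u\|,
\end{equation*}
which is the claimed inequality. I do not anticipate any genuine obstacle: both parts are immediate specializations of the two convolution theorems, and the only point requiring care is the bookkeeping of the parameters $p_i, q_i$ and the verification of $\|p_t \mid L_1\| \leq 1$, which is what removes the kernel norm from the estimate in part~\ref{levy-3-i}.
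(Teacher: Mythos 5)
Your proposal is correct and follows exactly the route the paper intends: the paper gives no separate proof of Theorem~\ref{levy-3}, declaring it ``immediate from Section~\ref{conv}'', which amounts precisely to your application of Theorem~\ref{conv-1} (with $p_1=p$, $p_2=1$, $C=1$ for $p<\infty$, combined with $\|p_t \mid L_1\| \leq 1$) and of the special case \eqref{conv-eq23} of Theorem~\ref{conv-3}. Your parameter bookkeeping and the remark on why $p=\infty$ is excluded from the contraction statement are both accurate.
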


\begin{remark} \label{levy-5}
    There is a necessary condition in terms of the characteristic exponent $\psi$ of $(P_t)_{t \geq 0}$ ensuring the existence of the kernel $p_t$. If $\psi$ satisfies the so-called \emph{Hartman--Wintner condition}
	\begin{equation}
		\lim_{|\xi| \to \infty} \frac{\re \psi(\xi)}{\log |\xi|} = \infty, \label{levy-eq11}
	\end{equation}
	then there is for every $t>0$ a non-negative function $p_t \in L_1 \cap C_b^{\infty}(\mbb{R}^n)$ such that $P_t f = f*p_t$, see \cite{knop13} for a detailed discussion. Under the milder assumption that there is some $M>0$ such that
	\begin{equation*}
		\lim_{|\xi| \to \infty} \frac{\re \psi(\xi)}{\log |\xi|} \geq M,
	\end{equation*}
	the operator $P_t$ has a kernel $p_t$ for sufficiently large $t \gg1$. These growth conditions are satisfied for a wide class of Markovian convolution semigroups. From the point of view of probability theory, the kernel $p_t$ is the transition density of the L\'evy process $(X_t)_{t \geq 0}$ associated with the semigroup $(P_t)_{t \geq 0}$, i.e.\ $\mbb{P}(X_t \in B) = \int_B p_t(y) \, dy$ for any Borel set $B \in \mc{B}(\mbb{R}^n)$.
\end{remark}

\begin{corollary} \label{levy-7}
	Let $(P_t)_{t \geq 0}$ be a Markovian convolution semigroup. If the characteristic exponent $\psi$ satisfies the Hartman--Wintner condition \eqref{levy-eq11}, then the kernel $p_t$ satisfies
	\begin{equation}
		\|p_t \mid A_{1,\infty}^u\|
		\leq C_u \left( 1+ \sup_{t/(\floor{u}+1) \leq r \leq t/\floor{u}} \int_{\mbb{R}^n} |\nabla p_r(x)| \, dx \right)^u \label{levy-eq14}
	\end{equation}
	for every $t>0$, $u>0$, $A \in \{B,F\}$ and some constant $C_u>0$. In particular,
	\begin{equation*}
		\|P_t f \mid A_{p,q}^{s+u}\|
		\leq C_u \left( 1+ \sup_{t/(\floor{u}+1) \leq 2r \leq t/\floor{u}} \int_{\mbb{R}^n} |\nabla p_r(x)| \, dx \right)^u \|f \mid A_{p,q}^s\|
	\end{equation*}
	for every $t>0$, $p,q \in [1,\infty]$, $s \in \mbb{R}$ and $u \geq 0$.
\end{corollary}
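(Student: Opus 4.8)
The plan is to read the estimate off the gradient bound of Theorem~\ref{semi-11}, the only genuinely new ingredient being the Markovian normalisation of the kernel. First I would record the hypotheses needed to invoke that theorem. By Remark~\ref{levy-5}, the Hartman--Wintner condition \eqref{levy-eq11} guarantees that $p_t \in L_1 \cap C_b^{\infty}(\mbb{R}^n)$ for every $t>0$; in particular $p_t \in C^1$, so the smoothness assumption of Theorem~\ref{semi-11} is met. Moreover, since $(P_t)_{t \geq 0}$ is Markovian, the kernel $p_r$ is a (sub-)probability density, whence $\|p_r \mid L_1\| = \mu_r(\mbb{R}^n) \leq 1$ for all $r>0$. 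This uniform bound on the $L_1$-norm is the decisive simplification that turns \eqref{semi-eq21} into \eqref{levy-eq14}.

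Next I would deal with the integrability of the gradient. If $\int_{\mbb{R}^n} |\nabla p_r(x)|\,dx = \infty$ for one of the times $r$ appearing on the right-hand side of \eqref{levy-eq14}, then that side is $+\infty$ and there is nothing to prove. Otherwise $\nabla p_{r_0} \in L_1$ for some small $r_0>0$, and the Chapman--Kolmogorov relation $p_t = p_{r_0} * p_{t-r_0}$ from Lemma~\ref{semi-3} together with Young's inequality \eqref{young} gives $\|\nabla p_t \mid L_1\| \leq \|\nabla p_{r_0}\mid L_1\| \cdot \|p_{t-r_0}\mid L_1\| \leq \|\nabla p_{r_0}\mid L_1\|$ for all $t \geq r_0$. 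Thus $\partial_{x_i} p_t \in L_1$ for all the times relevant to Theorem~\ref{semi-11}, and the monotonicity of $r \mapsto \|\nabla p_r \mid L_1\|$ allows me to evaluate the suprema at the left endpoints of the respective intervals.

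With the hypotheses in force, I would apply Theorem~\ref{semi-11} and substitute $\|p_r \mid L_1\| \leq 1$ into \eqref{semi-eq21}, obtaining
\begin{equation*}
	\|p_t \mid A_{1,\infty}^u\|
	\leq C_u \sup_{\frac{t}{\floor{u}+1} \leq r \leq \frac{t}{\max\{\floor{u},1\}}} \left( 1 + \|\nabla p_{r/2} \mid L_1\| \right)^u .
\end{equation*}
Since $x \mapsto (1+x)^u$ is continuous and increasing, the supremum may be pulled inside the bracket, and rewriting the admissible range in terms of the time $\tfrac{r}{2}$ at which the gradient is evaluated turns this into \eqref{levy-eq14}. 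Finally, for the ``in particular'' statement I would combine \eqref{levy-eq14} with the mapping estimate of Theorem~\ref{levy-3}\ref{levy-3-ii}, namely $\|P_t f \mid A_{p,q}^{s+u}\| \leq C \|p_t \mid A_{1,\infty}^u\|\cdot\|f \mid A_{p,q}^s\|$, and substitute.

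The bulk of the work already sits inside Theorem~\ref{semi-11}, so I do not expect a serious obstacle here. The only points of genuine care are the bookkeeping of the time-rescaling $r \leftrightarrow r/2$ inside the suprema and the verification that the Markovian structure really supplies the uniform bound $\|p_r \mid L_1\| \leq 1$; beyond these, the argument is a direct specialisation of the general convolution-semigroup result.
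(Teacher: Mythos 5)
Your skeleton is the paper's own: combine Theorem~\ref{semi-11} with the Markovian normalisation $\|p_r \mid L_1\| \leq 1$, then pass to the semigroup estimate via Theorem~\ref{levy-3}\ref{levy-3-ii}. The one place where you depart from the paper is the verification of the hypothesis of Theorem~\ref{semi-11} that $\partial_{x_i} p_t \in L_1$ for every $t>0$, and this is where your argument has a genuine gap. The paper does not argue by dichotomy: it invokes \cite{knop13} for the fact that the Hartman--Wintner condition \eqref{levy-eq11} yields not only $p_t \in L_1 \cap C_b^{\infty}$ (which is all that Remark~\ref{levy-5} records, and all that you use), but also $\partial^{\beta} p_t \in L_1$ for \emph{all} $\beta \in \mbb{N}_0^n$ and all $t>0$; with that, the hypotheses of Theorem~\ref{semi-11} hold outright and no case distinction is needed.

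Your dichotomy does not close the case it is meant to handle. The right-hand side of \eqref{levy-eq14} involves $\|\nabla p_r \mid L_1\|$ only for $r \in [\tfrac{t}{\floor{u}+1}, \tfrac{t}{\floor{u}}]$, whereas Theorem~\ref{semi-11} --- whether used as a black box (hypothesis at \emph{all} $t>0$) or opened up --- requires gradient integrability at the times $r/2$ with $r \in [\tfrac{t}{\floor{u}+1}, \tfrac{t}{\max\{\floor{u},1\}}]$, i.e.\ at times lying strictly \emph{below} that window (for $u \geq 2$ the two windows are even disjoint). Since your Chapman--Kolmogorov/Young argument propagates integrability only \emph{forward} in time from the window's left endpoint $r_0 = \tfrac{t}{\floor{u}+1}$, finiteness of the right-hand side of \eqref{levy-eq14} tells you nothing at the halved times the theorem actually uses; your claim that this yields integrability ``for all the times relevant to Theorem~\ref{semi-11}'' is therefore false. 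Concretely, the uncovered case is: $\nabla p_r$ integrable on the printed window but (a priori) not at the earlier, halved times --- then the right-hand side of \eqref{levy-eq14} is finite, yet neither branch of your dichotomy lets you conclude. The repair is exactly the paper's citation above. (A point in your favour: the factor-of-two bookkeeping you flag is a real wrinkle, but it sits in the paper itself --- your rescaling $r \mapsto r/2$ produces the window of the ``in particular'' display rather than the one printed in \eqref{levy-eq14}, and the paper's one-line proof glosses over the same discrepancy.)
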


\begin{proof}
	Since $\psi$ satisfies the Hartman--Wintner condition, there is for each $t>0$ a kernel $p_t$, i.e.\ $P_t f = f*p_t$, with $p_t \in C_b^{\infty}(\mbb{R}^n)$ and $\partial^{\beta} p_t(x) \in L_1$ for all $\beta \in \mbb{N}_0^n$, cf.\ \cite{knop13}. Moreover, $\|p_t \mid L_1\|=1$ since $\mu_t(dy)=p_t(y) \, dy$ is a probability measure. Applying Theorem~\ref{semi-11} proves the assertion.
\end{proof}

Let us illustrate Corollary~\ref{levy-7} with some basic examples.

\begin{example} \label{levy-8}
\begin{enumerate}[wide, labelwidth=!, labelindent=0pt]\renewcommand{\itemsep}{4pt}
\item\label{levy-8-i} (Gau{\ss}--Weierstra{\ss} semigroup)
    Consider $P_t f = f*p_t$ for
	\begin{equation*}
		p_t(x) = \frac{1}{(4\pi t)^{n/2}} \exp \left( - \frac{|x|^2}{4t} \right), \quad x \in \mbb{R}^n,\;t>0.
	\end{equation*}
	From $p_t(x)=t^{-n/2} p_1(x/\sqrt{t})$, we find that
	\begin{equation*}
		\int_{\mbb{R}^n} |\partial_{x_i} p_t(x)| \, dx
		= t^{-(n+1)/2} \int_{\mbb{R}^n} |(\partial_{x_i} p_1)(x/\sqrt{t})| \, dx \\
		= t^{-1/2} \int_{\mbb{R}^n} |\partial_{x_i} p_1(x)| \, dx;
	\end{equation*}
    note that the integral on the right-hand side is finite; in fact, a straightforward computation yields that the integral equals $1/\sqrt{\pi}$. Consequently, Corollary~\ref{levy-7} shows that
	\begin{equation*}
		\|P_t f \mid A_{p,q}^{s+u}\| \leq C (1+t^{-u/2}) \|f \mid A_{p,q}^{s}\|
	\end{equation*}
    for $p,q \in [1,\infty]$, $s \in \mbb{R}$, $u \geq 0$, $t>0$ and $A \in \{B,F\}$, i.e.\ we recover the classical result for the Gau{\ss}--Weierstra{\ss} semigroup, see \cite[Theorem 3.35]{triebel4} and also Corollary~\ref{semi-13}.
	
\item\label{levy-8-ii} (Cauchy--Poisson semigroup)
    Consider $P_t f = f*p_t$ for
	\begin{equation*}
		p_t(x) = \frac{1}{\pi} \frac{t}{x^2+t^2}, \quad x \in \mbb{R},\;t>0,
	\end{equation*}
    As $p_t(x) = t^{-1} p_1(x/t)$, it follows as in the previous example that $\int_{\mbb{R}} |\nabla p_t(x)| \,dx \leq C t^{-1}$. Thus, by Corollary~\ref{levy-7}, the Cauchy--Poisson semigroup satisfies
	\begin{equation*}
		\|P_t f \mid A_{p,q}^{s+u}\| \leq C (1+t^{-u}) \|f \mid A_{p,q}^{s}\|
	\end{equation*}
	for $p,q \in [1,\infty]$, $s \in \mbb{R}$, $u \geq 0$, $t>0$ and $A \in \{B,F\}$.
\end{enumerate}
\end{example}

The two semigroups from Example~\ref{levy-8} are special -- they are among the few Markovian convolution semigroups for which $p_t$  is known explicitly. In general, there is no closed-form for $p_t$, and this makes it much harder -- or even impossible -- to derive estimates for $\int |\nabla p_t(x)| \, dx$ using direct computations. There are, however, different means to establish upper bounds for the integral. Schilling et al.\ \cite{ssw12} showed that the small time asymptotics of $\int |\nabla p_t(x)| \, dx$ is closely related to the growth of (the real part of) the characteristic exponent $\psi$. Combining their result with Corollary~\ref{levy-7}, we get the following statement.

\begin{corollary} \label{levy-9}
    Let $(P_t)_{t \geq 0}$ be a Markovian convolution semigroup. Assume that the characteristic exponent $\psi$ satisfies the Hartman--Wintner condition \eqref{levy-eq11}. If there is a constant $\kappa\in (0,\infty)$ such that
	\begin{equation*}
		\frac 1\kappa \leq \frac{\re \psi(\xi)}{g(|\xi|)} \leq \kappa \quad \text{as $|\xi| \to \infty$}
	\end{equation*}
	for an increasing function $g:(0,\infty) \to (0,\infty)$ which is differentiable near infinity and satisfies
	\begin{equation}
			\liminf_{r \to 0} \frac{g(2 r)}{g(r)} >1, \label{levy-eq18}
	\end{equation}
	then there exists for every $u \geq0$ a constant $C>0$ such that
	\begin{equation*}
		\|P_t f \mid A_{p,q}^{s+u}\| \leq C \left(1 + g^{-1}\left(\frac{ 1}{t}\right) \right)^{ u} \|f \mid A_{p,q}^{s}\|
	\end{equation*}
	for every $p,q \in [1,\infty]$, $s \in \mbb{R}$, $t \in (0,1)$ and $A \in \{B,F\}$.
\end{corollary}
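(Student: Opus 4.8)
The plan is to feed the gradient estimate of Schilling, Sztonyk and Wang \cite{ssw12} into Corollary~\ref{levy-7}. Since $\psi$ satisfies the Hartman--Wintner condition \eqref{levy-eq11}, the kernel $p_t$ exists for every $t>0$, lies in $C_b^\infty(\mbb{R}^n)$ and has integrable derivatives of all orders; in particular Corollary~\ref{levy-7} applies and reduces the whole statement to a good bound on $\int_{\mbb{R}^n}|\nabla p_r(x)|\,dx$ for small $r$. Concretely, it suffices to establish that
\begin{equation*}
    \int_{\mbb{R}^n} |\nabla p_r(x)|\,dx \le C\, g^{-1}(1/r) \quad\text{for all sufficiently small } r>0,
\end{equation*}
because the right-hand side of Corollary~\ref{levy-7} is the supremum of this quantity over $r$ with $t/(\floor{u}+1) \le 2r \le t/\floor{u}$, raised to the power $u$.

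The key input is the result of \cite{ssw12}, which ties the small-time asymptotics of $\int|\nabla p_r(x)|\,dx$ to the growth of $\re\psi$. Heuristically, at time $r$ the relevant frequencies are those with $r\,\re\psi(\xi)\lesssim 1$; under the two-sided comparison $\tfrac1\kappa \le \re\psi(\xi)/g(|\xi|)\le\kappa$ for large $|\xi|$ this amounts to $|\xi|\lesssim g^{-1}(1/r)$, and in Fourier variables each derivative contributes one factor of $|\xi|$. Making this precise through \cite{ssw12} and the comparison with $g$ yields the displayed bound $\int|\nabla p_r|\,dx \le C g^{-1}(1/r)$ for small $r$. I expect this transfer --- converting the estimate of \cite{ssw12}, phrased in terms of $\re\psi$, into the clean expression $g^{-1}(1/r)$ by means of the upper and lower comparison constant $\kappa$ --- to be the main technical point.

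It remains to insert this into Corollary~\ref{levy-7} and tidy up the constants. As $g$ is increasing, so is $g^{-1}$, hence $r\mapsto g^{-1}(1/r)$ is decreasing and the supremum over $r \in [t/(2(\floor{u}+1)),\, t/(2\floor{u})]$ is attained at the left endpoint, with value $g^{-1}\!\big(2(\floor{u}+1)/t\big)$. The remaining obstacle is that the argument carries the spurious factor $2(\floor{u}+1)$, and this is exactly where the scaling condition \eqref{levy-eq18} enters: it furnishes a doubling property of $g$, equivalently of $g^{-1}$, which --- iterated a number of times depending only on $u$ --- gives $g^{-1}\!\big(2(\floor{u}+1)/t\big) \le C_u\, g^{-1}(1/t)$ for $t\in(0,1)$. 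Combining the three estimates and using $(1+C_u a)^u \le C_u'(1+a)^u$, we arrive at
\begin{equation*}
    \|P_t f \mid A_{p,q}^{s+u}\| \le C_u\big(1 + g^{-1}(1/t)\big)^u \,\|f \mid A_{p,q}^s\|, \quad t\in(0,1),
\end{equation*}
which is the assertion; the constant depends on $u$ (through $\floor{u}$, $\kappa$ and the doubling constant in \eqref{levy-eq18}) but not on $f$, $p$, $q$, $s$ or $A$.
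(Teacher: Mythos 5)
Your proposal is correct and is essentially the paper's own argument: the paper proves this corollary in one line by combining the gradient estimate of \cite{ssw12} (which gives precisely $\int_{\mbb{R}^n}|\nabla p_r(x)|\,dx \leq C\, g^{-1}(1/r)$ for small $r$ under the stated comparison and scaling conditions) with Corollary~\ref{levy-7}, exactly as you do. Your additional details --- taking the supremum at the left endpoint $r = t/(2(\floor{u}+1))$ and using the doubling property of $g^{-1}$ furnished by \eqref{levy-eq18} to absorb the factor $2(\floor{u}+1)$ into the constant $C_u$ --- are the right way to make that one-line combination precise.
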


Notice that \eqref{levy-eq18} is equivalent to the following growth condition on the inverse function $g^{-1}$:
	\begin{equation*}
		\limsup_{r \to \infty} \frac{g^{-1}(2r)}{g^{-1}(r)} < \infty,
	\end{equation*}
cf.\ \cite[Lemma 2.2]{deng17}. Corollary~\ref{levy-9} applies to a wide class of Markovian convolution semigroups; in particular, to the semigroups discussed in Example~\ref{levy-8}; in fact, it goes well beyond that as the following example shows.

\begin{example} \label{levy-11}
	Let $(P_t)_{t \geq 0}$ be a Markovian convolution semigroup with characteristic exponent $\psi$ of the form
	\begin{equation*}
		\psi(\xi) = \int_{y \neq 0} (1-e^{iy \cdot \xi} + i y \cdot \xi \I_{(0,1)}(|y|)) \, \nu(dy), \quad \xi \in \mbb{R}^n.
	\end{equation*}
	If the measure $\nu$ satisfies
	\begin{equation*}
		\nu(A)
		\geq \int_0^1 \int_{\mbb{S}^{n-1}} \I_A (r \theta) r^{-1-\alpha} \, \mu(d\theta) \, dr, \quad A \in \mc{B}(\mbb{R}^n \setminus \{0\}),
	\end{equation*}
    for some $\alpha \in (0,2)$ and measure $\mu$ on the sphere $\mbb{S}^{n-1}$ such that $\mu$ is non-degenerate, i.e.\ the support of $ds\otimes d\mu$ is truly $d$-dimensional, then there is for every $u \geq 0$ a constant $C>0$ such that
	\begin{equation*}
		\|P_t f \mid A_{p,q}^{s+u}\| \leq C t^{-u/\alpha} \|f \mid A_{p,q}^s\|, \quad t \in (0,1),
	\end{equation*}
    for all $p,q \in [1,\infty]$, $s \in \mbb{R}$, and $A \in \{B,F\}$. Indeed, if $\nu$ satisfies the above condition, then the assumptions of Corollary~\ref{levy-9} hold with $g(t)=t^{\alpha}$, cf.\ \cite{ssw12}, and the assertion follows.
\end{example}

The above results show that convolution estimates for Markovian convolution semigroups can be obtained under growth conditions on the associated continuous negative definite function $\psi$ or under certain assumptions on the characterizing triplet $(b,Q,\nu)$, cf.\ \eqref{levy-eq5}.

In the concluding part of this section, we use Theorem~\ref{levy-3} to establish estimates for Markovian convolution semigroups which have a particular structure; namely, for subordinate semigroups. Let us briefly recall some basics on Bochner's subordination before stating our result, we refer to the monograph \cite{bernstein} for full details.

A family $(\varrho_t)_{t \geq 0}$ of probability measures on $[0,\infty)$ is called a \emph{vaguely continuous convolution semigroup} if $(\varrho_t)_{t \geq 0}$ is a semigroup w.r.t.\ convolution, that is, $\varrho_{t+\tau}=\varrho_t * \varrho_{\tau}$ for all $t,\tau \geq 0$, and vaguely\footnote{The vague topology is the weak$^*$-topology in the space of (signed) Radon measures $(C_c(\mbb{R}^n))^*$.} continuous in the parameter $t$. Such semigroups can be uniquely characterized by their Laplace transform. More precisely, it holds that
\begin{equation}
	\int_{[0,\infty)} e^{-\lambda r} \, \varrho_t(dr) = e^{-t g(\lambda)}, \quad t \geq 0, \; \lambda \geq 0, \label{levy-eq21}
\end{equation}
for a \emph{Bernstein function} $g$, i.e.\ $g:(0,\infty) \to [0,\infty)$ is smooth and $(-1)^{k-1} g^{(k)} \geq 0$ for all $k \in\mbb{N}$. In fact, there is a one-to-one correspondence between Bernstein functions with $g(0)=0$ and vaguely continuous convolution semigroups of probability measures on $[0,\infty)$.  Important examples of Bernstein functions are $g(\lambda)=\lambda^{\alpha}$, $\alpha \in (0,1)$, $g(\lambda)=\log(1+\lambda)$ and $g(\lambda) = \sqrt{\lambda+c}-\sqrt{c}$, $c \geq 0$; see \cite[Chapter 16]{bernstein} for an extensive list of examples.  Every Bernstein function $g$ has a L\'evy--Khintchine representation
\begin{equation*}
	g(\lambda) = g(0) + b \lambda + \int_{[0,\infty)} (1-e^{-\lambda r}) \, \sigma(dr), \quad \lambda \geq 0,
\end{equation*}
where $b \geq 0$ and $\sigma$ is a measure on $(0,\infty)$ with $\int_{[0,\infty)} \min\{r,1\} \, \sigma(dr)<\infty$. From a probabilist's point of view, $\varrho_t(dr)=\mbb{P}(S_t \in dr)$ is the transition probability of a  \emph{subordinator} $(S_t)_{t \geq 0}$, i.e.\ a L\'evy process on $[0,\infty)$ with non-decreasing sample paths $t \mapsto S_t(\omega)$. \par \medskip

Let $(\varrho_t)_{t \geq 0}$ be a vaguely continuous semigroup of probability measures on $[0,\infty)$ and denote by $g$ the corresponding Bernstein function with $g(0)=0$. If $(P_t)_{t \geq 0}$ is any Markovian convolution semigroup, then
\begin{equation}
	P_t^{(g)} f := \int_{[0,\infty)} P_r f \, \varrho_t(dr) \label{levy-eq23}
\end{equation}
defines a Markovian convolution semigroup. Its characteristic exponent, cf.\ \eqref{levy-eq3}, can be expressed in terms of the characteristic exponent of $(P_t)_{t \geq 0}$ and the Bernstein function $g$. Let us consider the following important example.

\begin{example} \label{levy-12}
    Let $(P_t)_{t \geq 0}$ be the Gau{\ss}--Weierstra{\ss} semigroup, cf.\ Example~\ref{levy-8}\ref{levy-8-i}. By Tonelli's theorem, the subordinate semigroup $(P_t^{(g)})_{t \geq 0}$ satisfies $P_t^{(g)} f = f*\mu_t$ for the measure $\mu_t(dx) := \varrho_t(\{0\}) \delta_0(dx) + p_t(x) \I_{\mbb{R}^n \setminus \{0\}}(x) \, dx$ with
	\begin{equation*}
		p_t(x) = \int_{(0,\infty)} \frac{1}{(4\pi r)^{n/2}} \exp \left(- \frac{|x|^2}{4r} \right) \varrho_t(dr),
        \quad x \in \mbb{R}^n \setminus \{0\}, \; t>0.
	\end{equation*}
	In particular, $\mu_t$ is absolutely continuous with respect to Lebesgue measure if, and only if, $\varrho_t(\{0\})=0$; a sufficient condition is that $g$ satisfies the Hartman-Wintner condition
	\begin{equation*}
		\lim_{\lambda \to \infty} \frac{g(\lambda)}{\log \lambda}= \infty,
	\end{equation*}
	cf.\ \cite[Corollary 3.4]{schoenberg}.
	Moreover, it can be shown that the characteristic exponent $\psi^{(g)}$ of $(P_t^{(g)})_{t \geq 0}$ is $\psi^{(g)}(\xi) = g(|\xi|^2)$. For the particular choice $g(\lambda)=\sqrt{\lambda}$, we thus find that $(P_t^{(g)})_{t \geq 0}$ is the Cauchy--Poisson semigroup, cf.\ Example~\ref{levy-8}\ref{levy-8-ii}. More generally, if $g(\lambda)=\lambda^{\alpha}$ for some $\alpha \in (0,1)$, then $(P_t^{(g)})_{t \geq 0}$ is a stable semigroup; the corresponding infinitesimal generator is the fractional Laplacian $-(-\Delta)^{\alpha}$, which is one of the most studied non-local operators, see \cite{kwas} for a survey. Let us mention that there is a probabilistic interpretation of the subordinate semigroup $(P_t^{(g)})_{t \geq 0}$: If $(B_t)_{t \geq 0}$ is a Brownian motion in $\mbb{R}^n$ and $(S_t)_{t \geq 0}$ is the subordinator associated with $(\varrho_t)_{t \geq 0}$, see above, then $X_t := B_{S_t}$ defines a L\'evy process with distribution $\mbb{P}(X_t \in dx)=\mu_t(dx)$; in particular, we have $P_t^{(g)}f (x) = (f*\mu_t)(x)= \mbb{E}f(x+B_{S_t})$.
\end{example}

In the recent paper \cite{knop19}, the subordination technique was used to establish convolution estimates for a wide class of Markovian convolution semigroups. We recover one of their main results by applying Theorem~\ref{levy-3}.

\begin{corollary} \label{levy-13}
	Let $(T_t)_{t \geq 0}$ be a Markovian convolution semigroup with characteristic exponent $\psi$ of the form $\psi(\xi) = g(|\xi|^2)$, $\xi \in \mbb{R}^n$, for a Bernstein function $g$. If $\psi$ satisfies the Hartman--Wintner condition \eqref{levy-eq11}, then
	\begin{equation}
		\|T_t f \mid A_{p,q}^{s+u}\|
		\leq C_u \left( 1+ \int_{(0,\infty)} r^{-u/2} \, \varrho_t(dr) \right) \|f \mid A_{p,q}^s\| \label{levy-eq25}
	\end{equation}
	for every $p,q \in [1,\infty]$, $s \in \mbb{R}$, $u \geq 0$, $t > 0$, $A \in \{B,F\}$ and some constant $C_u>0$; here $(\varrho_t)_{t \geq 0}$ denotes the vaguely continuous convolution semigroup on $[0,\infty)$ associated with the Bernstein function $g$, cf.\ \eqref{levy-eq21}. If additionally
	\begin{equation*}
		\liminf_{\lambda \to 0} \frac{g(2\lambda)}{g(\lambda)} >1,
	\end{equation*}
	then
	\begin{equation}
		\|T_t f \mid A_{p,q}^{s+u}\|
		\leq C_u' \left( 1+ g^{-1}(1/t)^{u/2} \right) \|f \mid A_{p,q}^s\| \label{levy-eq26}
		\end{equation}
	for every $p,q \in [1,\infty]$, $s \in \mbb{R}$, $u \geq 0$, $t > 0$, $A \in \{B,F\}$ and some constant $C_u'>0$.
\end{corollary}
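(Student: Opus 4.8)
The plan is to recognise $(T_t)_{t\geq 0}$ as the semigroup obtained by subordinating the Gau{\ss}--Weierstra{\ss} semigroup from Example~\ref{levy-8}\ref{levy-8-i} (whose characteristic exponent is $|\xi|^2$) by means of $(\varrho_t)_{t\geq 0}$: since $\psi(\xi)=g(|\xi|^2)$, formula \eqref{levy-eq23} together with the one-to-one correspondence between Markovian convolution semigroups and characteristic exponents identifies $(T_t)_{t\geq 0}$ with the subordinate Gau{\ss}--Weierstra{\ss} semigroup. Because $\psi$ satisfies the Hartman--Wintner condition \eqref{levy-eq11}, Remark~\ref{levy-5} guarantees a kernel $p_t\in L_1\cap C_b^\infty$ with $T_tf=f*p_t$, and this condition also forces $\varrho_t(\{0\})=0$ (cf.\ Example~\ref{levy-12}), so that $p_t(x)=\int_{(0,\infty)} w_r(x)\,\varrho_t(dr)$, where $w_r(x)=(4\pi r)^{-n/2}\exp(-|x|^2/(4r))$ is the Gau{\ss}--Weierstra{\ss} kernel. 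By Theorem~\ref{levy-3}\ref{levy-3-ii} it therefore suffices to bound $\|p_t\mid A_{1,\infty}^u\|$, and we may assume $u>0$ since $u=0$ reduces to contractivity.

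To obtain \eqref{levy-eq25} I would apply Minkowski's integral inequality \eqref{min} in the norm $\|\bullet\mid A_{1,\infty}^u\|$ (a genuine norm, as $p=1$ and $q=\infty$): commuting $\phi_k(D)$ with the integral in $r$ and using the triangle inequality for the inner $L_1$- and $\sup$-operations gives $\|p_t\mid A_{1,\infty}^u\|\leq\int_{(0,\infty)}\|w_r\mid A_{1,\infty}^u\|\,\varrho_t(dr)$. For the Gau{\ss}--Weierstra{\ss} kernel the scaling bound $\int|\nabla w_\rho(x)|\,dx\leq C\rho^{-1/2}$ from Example~\ref{levy-8}\ref{levy-8-i}, fed into Theorem~\ref{semi-11}, yields $\|w_r\mid A_{1,\infty}^u\|\leq C_u(1+r^{-u/2})$. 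Since $\varrho_t$ is a probability measure with no atom at $0$, integrating this estimate produces $\|p_t\mid A_{1,\infty}^u\|\leq C_u\bigl(1+\int_{(0,\infty)}r^{-u/2}\,\varrho_t(dr)\bigr)$, and a final appeal to Theorem~\ref{levy-3}\ref{levy-3-ii} gives \eqref{levy-eq25}.

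For \eqref{levy-eq26} it remains to estimate the negative moment. Setting $\beta:=u/2$ and inserting $r^{-\beta}=\Gamma(\beta)^{-1}\int_0^\infty\lambda^{\beta-1}e^{-\lambda r}\,d\lambda$, Tonelli's theorem and the Laplace transform \eqref{levy-eq21} give $\int_{(0,\infty)}r^{-\beta}\,\varrho_t(dr)=\Gamma(\beta)^{-1}\int_0^\infty\lambda^{\beta-1}e^{-tg(\lambda)}\,d\lambda$. I would split this integral at $\tau:=g^{-1}(1/t)$. On $(0,\tau)$ the bound $e^{-tg(\lambda)}\leq 1$ yields the main term $\int_0^\tau\lambda^{\beta-1}\,d\lambda=\beta^{-1}g^{-1}(1/t)^\beta$. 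For the tail I substitute $\lambda=\tau s$ and use that the scaling hypothesis on $g$ -- equivalent, by the remark after Corollary~\ref{levy-9}, to the doubling property $\limsup_{r\to\infty}g^{-1}(2r)/g^{-1}(r)<\infty$ of the inverse -- provides, by a standard iteration, a lower scaling $g(\tau s)\geq c_0 s^{\delta}g(\tau)$ for some $\delta>0$, all $s\geq 1$ and all large $\tau$. Since $tg(\tau)=1$, the tail becomes $\tau^\beta\int_1^\infty s^{\beta-1}e^{-c_0 s^{\delta}}\,ds\leq C\tau^\beta$. Hence $\int_{(0,\infty)}r^{-\beta}\,\varrho_t(dr)\leq Cg^{-1}(1/t)^\beta$ for small $t$; for the remaining bounded range of $t$ the quantity $\int_0^\infty\lambda^{\beta-1}e^{-tg(\lambda)}\,d\lambda$ is non-increasing in $t$ and hence controlled by a constant, so the two regimes combine to $\int_{(0,\infty)}r^{-u/2}\,\varrho_t(dr)\leq C(1+g^{-1}(1/t)^{u/2})$. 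Substituting this into \eqref{levy-eq25} gives \eqref{levy-eq26}.

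The main obstacle is the tail estimate of the last paragraph: one must convert the scaling assumption on the Bernstein function $g$ into a \emph{uniform} lower bound $g(\tau s)/g(\tau)\geq c_0 s^{\delta}$ valid for all $s\geq 1$ and all large $\tau$, and recognise that the correct scale in the moment integral is $\lambda\sim g^{-1}(1/t)$, where the Gaussian-type cutoff $e^{-tg(\lambda)}$ sets in. The remaining ingredients -- the subordination representation of $p_t$, the Minkowski and Tonelli manipulations, and the Gau{\ss}--Weierstra{\ss} bounds -- are routine once the identification with the subordinate semigroup and the absence of an atom at $0$ (both consequences of the Hartman--Wintner condition) are in place.
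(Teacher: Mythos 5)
Your proof of \eqref{levy-eq25} coincides with the paper's: identify $(T_t)_{t\geq 0}$ as the Gau{\ss}--Weierstra{\ss} semigroup subordinated by $(\varrho_t)_{t\geq 0}$, write $p_t=\int_{(0,\infty)}q_r\,\varrho_t(dr)$ for the Gaussian kernels $q_r$ (using $\varrho_t(\{0\})=0$, guaranteed by Hartman--Wintner), apply the vector-valued triangle (Minkowski) inequality to get $\|p_t\mid A_{1,\infty}^u\|\leq\int_{(0,\infty)}\|q_r\mid A_{1,\infty}^u\|\,\varrho_t(dr)$, insert the Gaussian estimate $\|q_r\mid A_{1,\infty}^u\|\leq C_u(1+r^{-u/2})$ (the paper obtains it from Corollary~\ref{levy-7}, you directly from Theorem~\ref{semi-11}; these are the same bound, since Corollary~\ref{levy-7} is Theorem~\ref{semi-11} combined with $\|q_r\mid L_1\|=1$), and finish with Theorem~\ref{levy-3}\ref{levy-3-ii}. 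Where you genuinely differ is \eqref{levy-eq26}: the paper disposes of it in one line by citing \cite[Lemma 7.2]{knop19} for the two-sided estimate $\int_{(0,\infty)}r^{-u/2}\,\varrho_t(dr)\asymp g^{-1}(1/t)^{u/2}$ for small $t$, whereas you prove the upper bound (which is all that is needed) from scratch: the identity $r^{-\beta}=\Gamma(\beta)^{-1}\int_0^\infty\lambda^{\beta-1}e^{-\lambda r}\,d\lambda$ together with Tonelli and \eqref{levy-eq21} turns the moment into $\Gamma(\beta)^{-1}\int_0^\infty\lambda^{\beta-1}e^{-tg(\lambda)}\,d\lambda$ (legitimate precisely because $\varrho_t(\{0\})=0$), which you split at $\tau=g^{-1}(1/t)$ and control via the iterated scaling bound $g(\tau s)\geq c_0 s^{\delta}g(\tau)$, with monotonicity in $t$ covering $t$ bounded away from $0$. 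This argument is correct and buys a self-contained proof -- it is, in essence, a proof of the cited lemma. One caveat, which you inherit from the paper rather than create: your tail estimate needs scaling of $g$ at infinity (equivalently, doubling of $g^{-1}$ at infinity), while the displayed hypothesis reads $\liminf_{\lambda\to 0}g(2\lambda)/g(\lambda)>1$; you bridge this gap by invoking the remark following Corollary~\ref{levy-9}. Since small $t$ corresponds to large $\lambda\sim g^{-1}(1/t)$, the condition must indeed be read at $\lambda\to\infty$ for either your argument or the cited lemma to apply, and it would strengthen your write-up to say this explicitly rather than leave it implicit.
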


\begin{remark} \label{levy-15}
\begin{enumerate}[wide, labelwidth=!, labelindent=0pt]
	 \item The integral expression $K_t := \int_{(0,\infty)} r^{-u/2} \, \varrho_t(dr)$ appearing in \eqref{levy-eq25} is finite because $\psi$ satisfies the Hartman--Wintner condition, see \cite[Corollary 3.4]{schoenberg}. The integral $K_t$ can be written more compactly using the notions from probability theory: If $(S_t)_{t \geq 0}$ is the subordinator associated with $(\varrho_t)_{t \geq 0}$, then $\varrho_t(dr) =\mbb{P}(S_t \in dr)$ is the distribution of $S_t$, and so
	\begin{equation*}
		K_t=\int_{(0,\infty)} r^{-u/2} \varrho_t(dr) = \mbb{E}\left(S_t^{-u/2}\right), \quad t>0.
	\end{equation*}
	\item 	In Corollary~\ref{levy-13} we assume that the characteristic exponent $\psi$ is of the form $g(|\xi|^2)$ for a Bernstein function $g$; equivalently, $T_t = P_t^{(g)}$ for the semigroup $(P_t^{(g)})_{t \geq 0}$ obtained from the Gau{\ss}--Weierstra{\ss} semigroup $(P_t)_{t \geq 0}$ by subordination, cf.\ Example~\ref{levy-12}. In fact, our methods can be used to derive convolution estimates for a much larger class of subordinate semigroups. Let $(P_t)_{t \geq 0}$ be any Markovian convolution semigroup with kernels $q_t$ and characteristic exponent $\psi$. If $g$ is a Bernstein function such that $\xi \mapsto g(\psi(\xi))$ satisfies the Hartman--Wintner condition \eqref{levy-eq11}, then the proof of Corollary~\ref{levy-13} shows that the kernels $p_t^{(g)}$ of the subordinate semigroup $(P_t^{(g)})_{t \geq 0}$ satisfy
		\begin{equation*}
			\|p_t^{(g)} \mid A_{1,\infty}^u\| \leq \int_{(0,\infty)} \|q_r \mid A_{1,\infty}^u\| \, \varrho_t(dr),
		\end{equation*}
		and, by Theorem~\ref{levy-3}, this yields convolution estimates for $(P_t^{(g)})_{t \geq 0}$. The norm $\|q_r \mid A_{1,\infty}^u\|$ appearing on the right-hand side can be estimated in many cases, e.g.\ using gradient estimates, see the results presented earlier in this section.
\end{enumerate}
\end{remark}

\begin{proof}[Proof of Corollary~\ref{levy-13}]
	The kernel $p_t$ of $T_t$ is given by
	\begin{equation*}
		p_t(x) = \int_{(0,\infty)} q_r(x) \, \varrho_t(dr), \quad x \in \mbb{R}^n \setminus \{0\},\;t>0,
	\end{equation*}
	where
	\begin{equation*}
		q_r(x) := \frac{1}{(4\pi r)^{n/2}} \exp \left(- \frac{|x|^2}{4r} \right), \quad x \in \mbb{R}^n,\;r>0,
	\end{equation*}
	is the  Gaussian heat kernel, cf.\ Example~\ref{levy-12} Since $\varrho_t$ is a probability measure, the vector-valued triangle inequality shows that
	\begin{equation*}
		\|p_t \mid A_{1,\infty}^u\|
		\leq \int_{(0,\infty)} \|q_r \mid A_{1,\infty}^u\| \, \varrho_t(dr).
	\end{equation*}
	As $\int_{\mbb{R}^n} |\nabla q_r(x)| \, dx \leq c r^{-1/2}$, $r>0$, for some constant $c>0$, cf.\ Example~\ref{levy-8}\ref{levy-8-i}, it follows from Corollary~\ref{levy-7} that $\|q_r \mid A_{1,\infty}^u\| \leq C_u (1+r^{-u/2})$ for every $u>0$. Using that $\varrho_t$ is a probability measure, we conclude that
	\begin{equation*}
		\|p_t \mid A_{1,\infty}^u\|
		\leq C_u \int_{(0,\infty)} (1+r^{-u/2}) \, \varrho_t(dr)
		= C_u \left( 1+ \int_{(0,\infty)} r^{-u/2} \, \varrho_t(dr) \right),
	\end{equation*}
	and by Theorem~\ref{levy-3} this proves \eqref{levy-eq25}. Under the additional growth assumption on $g$, we see that $\int_{(0,\infty)} r^{-u/2} \, \varrho_t(dr)$ can be estimated from above and below by a multiple of $g^{-1}(1/t)^{u/2}$ if $t>0$ is small, cf.\ \cite[Lemma 7.2]{knop19}; thus, \eqref{levy-eq26} follows.
\end{proof}

\begin{ack}
    We thank the editors and the referee for their expert handling of this paper. The referee's comments were most helpful, resulting in an improved version of the paper. We are grateful to Hans Triebel (Jena) for his comments on the reiteration theorem in interpolation theory. The second-named author has been supported through the DFG-NCN Beethoven Classic 3 programme, contract no.~2018/31/G/ST1/02252 (National Science Center, Poland) and SCHI-419/11--1 (DFG, Germany).
\end{ack}

\end{document}